\newcommand{\mc}{\mathcal}
\newcommand{\cp}{\times}
\newcommand{\bol}{\boldsymbol}
\newcommand{\abs}[1]{\left\lvert{#1}\right\rvert}
\newcommand{\w}{\wedge}
\newcommand{\lr}[1]{\left({#1}\right)}
\newcommand{\lrs}[1]{\left[{#1}\right]}
\newcommand{\lrc}[1]{\left\{{#1}\right\}}
\newcommand{\mf}{\mathfrak}
\newcommand{\p}{\partial}
\newcommand{\ti}[1]{\textit{#1}}
\newcommand{\tb}[1]{\textbf{#1}}
\newtheorem{mydef}{\textit{Def}}
\newtheorem{remark}{\textit{Remark}}
\newtheorem{theorem}{\textit{Theorem}}
\newtheorem{proposition}{\textit{Proposition}}
\newtheorem{lemma}{\textit{Lemma}}
\newcommand{\eq}[1]{\begin{equation}\begin{split}{#1}\end{split}\end{equation}}
\newcommand{\sys}[2]{\begin{subequations}\begin{align}{#1}\end{align}\label{#2}\end{subequations}}
\begin{document}

\title{Steady Compressible 3D Euler Flows 
\\ in   
Toroidal Volumes  without Continuous Euclidean Isometries}
\author[1]{Naoki Sato} \author[2]{Michio Yamada}
\affil[1]{National Institute for Fusion Science, \protect\\ 322-6 Oroshi-cho Toki-city, Gifu 509-5292, Japan \protect\\ Email: sato.naoki@nifs.ac.jp}
\affil[2]{Research Institute for Mathematical Sciences, \protect\\ Kyoto University, Kyoto 606-8502, Japan
\protect \\ Email: yamada@kurims.kyoto-u.ac.jp}
\date{\today}
\setcounter{Maxaffil}{0}
\renewcommand\Affilfont{\itshape\small}

    \maketitle
    \begin{abstract}
We demonstrate the existence of smooth three-dimensional vector fields where the cross product between the vector field and its curl is balanced by the gradient of a smooth function, with toroidal level sets that are not invariant under continuous Euclidean isometries. This finding indicates the existence of steady compressible Euler flows, either influenced by an external potential energy or maintained by a density source in the continuity equation, that are foliated by asymmetric nested toroidal surfaces. Our analysis suggests that the primary obstacle in resolving Grad's conjecture regarding the existence of nontrivial magnetohydrodynamic equilibria arises from the incompressibility constraint imposed on the magnetic field.
    \end{abstract}

\section{Introduction}
This paper explores the existence of a unique class of steady fluid flows that lack rotational, translational, or helical symmetry, which are crucial for the advancement of next-generation fusion reactors, particularly stellarators \cite{Hel}. Unlike other fusion designs, stellarators require inherently three-dimensional configurations that are optimized to minimize particle losses at the walls of the confining vessel, making them fundamentally asymmetric. The search for such asymmetric configurations is closely tied to the unresolved Grad's conjecture, a topic that will be further elaborated below.

Although the problem is mathematical in nature, the existence of these asymmetric solutions has significant practical implications. Currently, stability calculations are performed numerically on configurations whose existence is not theoretically guaranteed, raising concerns about the reliability of such approaches in the development of stellarators. 

In this work, we investigate compressible Euler flows that are either influenced by an external potential or sustained by a density source in the continuity equation, providing evidence for the existence of asymmetric solutions within this framework. Additionally, we explore the implications of these findings for addressing Grad's conjecture, highlighting the potential impact on understanding and resolving this longstanding problem.

Let $\Omega\subset\mathbb{R}^3$ denote a smooth bounded toroidal volume with boundary $\p\Omega$. 
The present paper is concerned with the system of equations  
\sys{
&\lr{\nabla\cp\bol{u}}\cp\bol{u}=\nabla P~~~~{\rm in}~~\Omega,\\
&\bol{u}\cdot\bol{n}=0,~~~~
P=P_b~~~~{\rm on}~~\p\Omega.
}{sys1}
Here, $\bol{u}\lr{\bol{x}}$ is the three-dimensional velocity field, $P\lr{\bol{x}}$ the (effective) pressure field, $\bol{n}$ the unit outward normal to $\p\Omega$, $P_b\in\mathbb{R}$, and $\bol{x}=\lr{x,y,z}\in\mathbb{R}^3$. Our main result is the following:

\begin{theorem}
For suitable choice of the toroidal domain $\Omega$, system \eqref{sys1} admits  solutions $\lr{\bol{u},P}\in C^{\infty}\lr{\Omega}$ such that 
$\nabla\cp\bol{u}$ and $\bol{u}$ are not everywhere collinear and level sets of $P$ are not invariant under continuous Euclidean isometries. 
\end{theorem}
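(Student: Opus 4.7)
The plan is to build the solution by prescribing an asymmetric toroidal foliation of $\Omega$ first, and then constructing $\bol{u}$ tangent to the leaves so that $\lr{\cu\bol{u}}\cp\bol{u}$ is proportional to the leaf normal. The guiding observation is that, by dotting \eqref{sys1} with $\bol{u}$ and with $\cu\bol{u}$, any classical solution satisfies $\bol{u}\cdot\nabla P=0$ and $\lr{\cu\bol{u}}\cdot\nabla P=0$; both $\bol{u}$ and $\cu\bol{u}$ are therefore tangent to every level set of $P$.

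First, I would construct the pair $\lr{\Omega,\Psi}$. Starting from a nested family of round tori, I would smoothly deform the cross-sections by a bump function depending nontrivially on both the toroidal and the poloidal angle, chosen so that no one-parameter family of Euclidean isometries preserves any leaf. This yields a toroidal domain $\Omega$ and $\Psi\in C^{\infty}\lr{\ov\Omega}$ with $\nabla\Psi\ne 0$, whose level sets form the desired nested asymmetric tori, the outermost coinciding with $\p\Omega$. I then set $P=P\lr{\Psi}$ for a strictly monotone profile taking the value $P_b$ on $\p\Omega$.

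Second, I would look for $\bol{u}$ in the ansatz $\bol{u}=f\,\nabla\Psi\cp\nabla\varphi+g\,\nabla\Psi\cp\nabla\theta$, with $\theta,\varphi$ smooth poloidal and toroidal angles adapted to the foliation and $f,g\in C^{\infty}\lr{\ov\Omega}$ the unknowns. The tangency $\bol{u}\cdot\nabla\Psi=0$ and the boundary condition $\bol{u}\cdot\bol{n}=0$ are automatic. Two scalar conditions remain: (i) $\di\lr{\bol{u}\cp\nabla\Psi}=0$, which forces $\lr{\cu\bol{u}}\cp\bol{u}$ to lie along $\nabla\Psi$, and (ii) the magnitude condition that the resulting coefficient equal $P'\lr{\Psi}$. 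Since no incompressibility constraint is imposed on $\bol{u}$, these are only two conditions on the pair $(f,g)$, and the system is not overdetermined; I would solve it leaf-by-leaf, using the free profile $P\lr{\Psi}$ to absorb the integrated solvability condition on each torus.

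The main obstacle I anticipate is the global assembly: producing smooth angular coordinates $\theta,\varphi$ on the whole asymmetric foliation (in particular handling any degenerate central leaf) and promoting the torus-by-torus solvability to a $C^{\infty}$ vector field on $\ov\Omega$ that matches the prescribed boundary data. Once this is done, the required non-collinearity of $\cu\bol{u}$ and $\bol{u}$ is automatic from strict monotonicity of $P$, since parallelism would force $\lr{\cu\bol{u}}\cp\bol{u}\equiv 0$ and hence $\nabla P\equiv 0$. The contrast with Grad's conjecture emphasized in the abstract appears precisely here: adding the MHD constraint $\di\bol{u}=0$ would introduce a \emph{third} scalar equation on $(f,g)$, overdetermining the system and destroying the flexibility that our construction exploits.
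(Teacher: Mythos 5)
There is a genuine gap, and it sits exactly where you flag it yourself: you prescribe an arbitrary asymmetric foliation $\Psi$ first and then claim that the two scalar conditions on $\lr{f,g}$ --- the tangency of $\cu\bol{u}$ to the leaves, $\di\lr{\bol{u}\cp\nabla\Psi}=\nabla\Psi\cdot\cu\bol{u}=0$, and the magnitude condition --- can be solved because ``two equations for two unknowns'' is not overdetermined. But these are coupled nonlinear first-order PDEs for $\lr{f,g}$ whose characteristic directions depend on the unknowns themselves; formal counting of equations versus unknowns establishes nothing about existence of global smooth solutions on a torus, and this is precisely the difficulty (emphasized in the paper via Grad and Lo Surdo) that makes the whole subject nontrivial. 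Your plan to ``solve leaf-by-leaf and absorb the integrated solvability condition into the free profile $P\lr{\Psi}$'' presupposes that the obstruction on each leaf is a single scalar that one constant per leaf can kill; no such reduction is demonstrated, and there is no reason to expect that a \emph{generic} asymmetric foliation admits any tangent field with $\lr{\cu\bol{u}}\cp\bol{u}$ exactly collinear with $\nabla\Psi$. Indeed the closest existing result in this direction (the authors' earlier work on prescribed flux surfaces) only achieves $\lr{\cu\bol{u}}\cp\bol{u}=\lambda\nabla P$ with a nonconstant integration factor $\lambda$, which is strictly weaker than what you need here.

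The paper avoids this trap by \emph{not} decoupling the foliation from the field. It writes $\bol{u}$ in Clebsch form, reduces \eqref{sys1} to compatibility conditions \eqref{sys13} on the metric coefficients of a coordinate map $\bol{x}\lr{\Psi,\Theta,\zeta}$, and then exhibits an explicit family of symmetry-breaking perturbations of the axisymmetric torus coordinates (eq. \eqref{sys16}, with the perturbations constrained by \eqref{delta}) that satisfy those conditions identically; the solution is then simply $\bol{u}=-\p_{\Theta}$ with $P=\Psi$, and the remaining work is to verify invertibility of the coordinate change, diffeomorphy of the level sets to tori, and absence of Euclidean symmetries. In other words, the foliation is an \emph{output} of the construction, chosen from the special class compatible with \eqref{sys12}, not an input. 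To rescue your approach you would need either to prove solvability of your $\lr{f,g}$ system for some explicitly constructed asymmetric $\Psi$ (at which point you would be doing essentially the paper's computation in different variables), or to restrict at the outset to foliations satisfying a compatibility condition equivalent to \eqref{sys13}. The concluding heuristic --- that $\di\bol{u}=0$ would ``overdetermine'' the system --- is also not quite right: in the paper's formulation the incompressible case is three second-order equations for the three components of $\bol{x}\lr{\Psi,\Theta,\zeta}$ (Propositions 6 and 7), so it is not overdetermined by counting; it is simply harder to solve.
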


As discussed in detail  in sec. 2, system \eqref{sys1} describes a steady compressible Euler flow either subject to an external potential $V\lr{\bol{x}}$ or 
sustained by a source/sink term $S\lr{\bol{x}}$ on the right-hand side of the continuity equation. Furthermore, it is a special case of the more general system of equations 
\begin{subequations}
\begin{align}
&\lr{\nabla\cp\bol{u}}\cp\bol{u}=\lambda\nabla P
,~~~~ \nabla\cdot\lr{\varrho\bol{u}}=S~~~~{\rm in}~~\Omega,\\
&\bol{u}\cdot\bol{n}=0,~~~~
P=P_b~~~~{\rm on}~~\p\Omega,
\end{align}\label{sys2}
\end{subequations}
where $\varrho\lr{\bol{x}}$ represents the fluid mass density and $\lambda\lr{\bol{x}}$ an integration factor that can be physically interpreted as a type of pressure anisotropy.  
Due to the well known correspondence between the Euler equations and magnetohydrodynamics (MHD), when $\nabla\cdot\bol{u}=0$ system \eqref{sys1} reduces to the magnetohydrostatics (MHS) equations \cite{Kruskal,Moffatt85}, 
\sys{
&\lr{\nabla\cp\bol{B}}\cp\bol{B}=\nabla P~~~~{\rm in}~~\Omega,\\
&\nabla\cdot\bol{B}=0~~~~{\rm in}~~\Omega,\\
&\bol{B}\cdot\bol{n}=0,~~~~P=P_b~~~~{\rm on}~~\p\Omega.  
}{MHS}
These equations, which can be obtained from \eqref{sys2} by setting $\bol{u}=\bol{B}$, $S=0$, $V=0$, $\lambda=1$, and $\varrho=\varrho_0\in\mathbb{R}_{\geq 0}$, describe steady force balance between the Lorentz force $\lr{\nabla\cp\bol{B}}\cp\bol{B}$ associated with the magnetic field $\bol{B}\lr{\bol{x}}$ and the pressure gradient $\nabla P$ within MHD. 
For this reason, they play an important role in the design and development of the confining magnetic field of proposed toroidal nuclear fusion reactors known as tokamaks (which are axially symmetric) and stellarators (which exhibit a 3D geometry) \cite{Hel}. 
Furthermore, they pose a fundamental theoretical question: whether a force that is generally non-conservative (the Lorentz force) can be balanced by a conservative one (the pressure gradient).

At present, a general theory concerning the existence of regular solutions of the nonlinear first order system of PDEs \eqref{MHS} for the unknowns $\lr{\bol{B},P}$ is not available \cite{LoSurdo}. 
This is because the characteristic surfaces associated with the hyperbolic part of system \eqref{MHS} depend explicitly on the unknown $\bol{B}$, a fact that makes the 
existence of global (that hold throughout $\Omega$) solutions a challenging mathematical problem \cite{Grad60,Yos90}.
Methods to obtain solutions usually rely on the elimination of the nonlinearity in the governing equations by setting $\nabla P=\bol{0}$, or on the assumption of a symmetry (invariance under a continuous Euclidean isometry) in the candidate solution $\lr{\bol{B},P}$ that enables the removal of the hyperbolic part of the system, leaving an elliptic PDE (the Grad-Shafranov equation \cite{Grad58}). 

The list below summarizes 
known results concerning the existence and regularity of solutions of the magnetohydrostatic equations \eqref{MHS}, and the parent systems \eqref{sys1} and \eqref{sys2}. Here, $\Omega$ is the smooth bounded toroidal domain introduced above, unless differently specified. 
It should be noted that the  domain $\Omega$ is chosen to be a toroidal volume not only because of nuclear fusion applications, but also due to the hairy ball and Poincar\'e-Hopf theorems  
\cite{Eisen}, and the closely related Arnold's structure theorem \cite{Arnold,SalasI}, 
which restrict the topologies of the domain $\Omega$ compatible with system \eqref{MHS}.
\begin{enumerate}[label=\arabic*., listparindent=1.5em]
\item \ti{Vacuum solutions $\nabla\cp\bol{B}=\bol{0}$ of system \eqref{MHS}}. In this case the magnetic field is the unique Neumann harmonic vector field in the toroidal domain $\Omega$ \cite{Schwarz}. 
\item \ti{Beltrami field solutions $\nabla\cp\bol{B}=\alpha\bol{B}$ of system \eqref{MHS}}. In this case magnetic field and electric current are collinear, with proportionality factor $\alpha\lr{\bol{x}}$. Existence of solutions depends on whether the proportionality factor is a real constant \cite{YosGiga} or not \cite{SalasBel}. 
In the former case, solutions in the Sobolev space $H^1\lr{\Omega}$ exist, with the admissible constants $\alpha$ depending on the topology of the domain under consideration. On the other hand, 
nonconstant proportionality factors introduce obstructions to the existence of solutions \cite{SalasBel,SatoYamadaBel, Clelland, Abe,SalasBel2}. An open question is whether the function $\alpha$, which corresponds to a first integral $\bol{B}\cdot\nabla\alpha=0$ of the magnetic field, can be chosen to be constant on $\p\Omega$ for a general  toroidal domain $\Omega$. 
\item \ti{Symmetric solutions of system \eqref{MHS}}. In this case one assumes that there exist constants $\bol{a},\bol{b}\in\mathbb{R}^3$ 
and a vector field $\bol{\xi}_{E}=\bol{a}+\bol{b}\cp\bol{x}\neq\bol{0}$   
such that the Lie-derivatives $\mf{L}_{\bol{\xi}_E}\bol{B}=\bol{0}$ and  $\mf{L}_{\bol{\xi}_E}P=0$ identically vanish in $\Omega$. The vector field $\bol{\xi}_E$ is the generator of a continuous Euclidean isometry, implying that the solution $\lr{\bol{B},P}$ is invariant under a combination of translations and rotations. Under this assumption, system \eqref{MHS} reduces to the Grad-Shafranov equation, a nonlinear second order elliptic PDE for the flux function \cite{Grad58, Eden1, Eden2}. The existence of solutions can be discussed within the framework of standard elliptic theory.   
\item \ti{Stepped pressure solutions of system \eqref{MHS}}. In this setting, the toroidal domain $\Omega$ is partitioned into sub-regions   
filled by Beltrami fields. 
At the boundaries of these sub-regions the value of the pressure jumps, realizing a pressure gradient at the price of low regularity of solutions \cite{Bruno,Bob15,SalasMHS}.  

\item \ti{Nonlinear asymmetric solutions of system \eqref{MHS}}.  Here, the toroidal domain $\Omega$ is not invariant under any  continuous Euclidean isometry, and   equilibria are supported by a non-vanishing pressure gradient. According
to Grad's conjecture \cite{Grad}, regular solutions of this type  
do not exist. 
Given the absence of a formal statement, there is no accepted standard for the degree of regularity implied in the conjecture. However, given that $\bol{B}$,  $\nabla\cp\bol{B}$, and $\nabla P$ are physical observables, it seems reasonable to demand at least $\lr{\bol{B},P}\in C^1\lr{\Omega}$. 
In this study, we will adopt the  formulation of Grad's conjecture for smooth solutions:

\ti{`
Given a smooth, bounded toroidal domain \(\Omega \subset \mathbb{R}^3\) that is not invariant under any continuous Euclidean isometry, there do not exist smooth solutions to the system \eqref{MHS} where the vector fields \(\bol{B}\) and \(\nabla \times \bol{B}\) are non-collinear.'} 

It has been shown \cite{Pasq} that the conjecture does not hold 
for $\bol{B}\in H^1\lr{\Omega}$. Indeed, nontrivial solutions in this class have been obtained  through a Voigt regularization  scheme \cite{Cao06,Larios14,Larios10,Ramos10,Levant10} for the MHD equations in which regularizing differential operators are introduced, enabling 
convergence of time-dependent solutions to nontrivial  equilibria \eqref{MHS}. An open question is whether this type of regularization can be applied to improve regularity beyond $H^1\lr{\Omega}$. 
It is also known \cite{Davids3} that the conjecture breaks down 
in non-Euclidean metrics: 
there exist MHD
equilibria with non-vanishing  pressure gradients on compact three-manifolds with or
without boundary 
and with no continuous Killing symmetries.

\item \ti{Nonlinear asymmetric solutions of system \eqref{sys2} with $\nabla\cdot\bol{u}=0$, $\varrho\in\mathbb{R}_{\geq0}$, and  $S=0$.}  
These solutions include MHD equilibria sustained by   anisotropic pressure \cite{Grad67}. 
Given a well-behaved set of 
nested flux surfaces $P$, it has been shown \cite{SatoYamada23} that solutions $\bol{u}\in C^{\infty}\lr{\Omega}$, with $\Omega$ a smooth bounded \ti{hollow} toroidal volume, exist for a suitable choice of the integration factor $\lambda\lr{\bol{x}}$.
This result suggests that one of the key obstructions in solving system \eqref{MHS} is represented by the conservative nature of the pressure gradient  
(the exactness of the $1$-form $dP$): force balance can be achieved as soon as this property is violated. 


\item \ti{Nonlinear asymmetric solutions of system \eqref{sys1}}. These are the compressible solutions examined in the present work, and fall in the class $\lr{\bol{u},P}\in C^{\infty}\lr{\Omega}$ (see theorem 1 above). This result points to the incompressible nature of the magnetic field as the other key obstruction in solving system \eqref{MHS}. 
This observation is consistent with the fact that there exist a correspondence between a first integral of a divergence-free field in three dimensions, and the existence of a continuous (not necessarily Euclidean) symmetry \cite{Davids, Davids2,Constantin21}. 
\end{enumerate}

The primary objective of this study is to prove Theorem 1. The paper is organized as follows: in Section 2, we provide a straightforward derivation of system \eqref{sys1} within the framework of the compressible Euler equations. We then perform a Clebsch reduction of system \eqref{sys1} by representing the vector field 
$\bol{u}$ using Clebsch potentials \cite{YosClebsch,YosMor}. The details of this reduction are thoroughly explained in the appendix. Section 3 presents a constructive proof of Theorem 1 by deriving explicit solutions of the reduced system. The properties of these solutions are discussed in Section 4, and Section 5 explores the implications of this study for the MHS equations \eqref{MHS}. Finally, concluding remarks are provided in Section 6.


\section{Steady compressible Euler flows with external potential or density source}
The aim of this section is to show that system \eqref{sys1} arises as a steady state in the context of the compressible Euler equations subject to an external potential in the momentum equation  or endowed with a density source in the continuity equation for mass conservation. 

We begin by considering the compressible Euler equations with external potential and density source  
in $\Omega\times [0,+\infty)$. 
Let $\bol{u}\lr{\bol{x},t}$, $\varrho\lr{\bol{x},t}$, $P'=P'\lr{\varrho}$,  $S\lr{\bol{x},t}$, and $V\lr{\bol{x},t}$ 
denote the velocity field, the mass density, the barotropic pressure, the density source, and the potential energy 
respectively. 
The equations read as 
\begin{subequations}
\begin{align}
&\varrho\frac{\p\bol{u}}{\p t}=-\varrho\bol{u}\cdot\nabla\bol{u}-\nabla P'-\rho\nabla V~~~~{\rm in}~~\Omega,
\\
&\frac{\p\varrho}{\p t}=-\nabla\cdot\lr{\varrho\bol{u}}+S~~~~{\rm in}~~\Omega,\\
&\bol{u}\cdot\bol{n}=0,~~~~
\frac{\bol{u}^2}{2}+h+V=-P_b
~~~~{\rm on}~~\p\Omega, 
\end{align}\label{Euler}
\end{subequations}
where $h\lr{\varrho}$ denotes a function of the density $\varrho$ such that $\nabla P'=\varrho\nabla h$. 
Observe that in system \eqref{Euler} the evolution equations for the internal energy density and the source $S$, as well as initial conditions are omitted since they are not needed in the  analysis of system \eqref{sys1}. 

First, let us examine the case in which there is no source term in the continuity equation, $S=0$, and the system is subject to an external potential $V$. 
Setting $-P=\bol{u}^2/2+h+V$, system 
\eqref{Euler} becomes
\begin{subequations}
\begin{align}
&\frac{\p\bol{u}}{\p t}=\bol{u}\cp\lr{\nabla\cp\bol{u}}+\nabla P~~~~{\rm in}~~\Omega,
\\
&\frac{\p\varrho}{\p t}=-\nabla\cdot\lr{\varrho\bol{u}}~~~~{\rm in}~~\Omega,\\
&\bol{u}\cdot\bol{n}=0,~~~~
P=P_b
~~~~{\rm on}~~\p\Omega.
\end{align}\label{redEuler0}
\end{subequations}
Eliminating time derivatives gives  
\begin{subequations}
\begin{align}
&\lr{\nabla\cp\bol{u}}\cp\bol{u}=\nabla P~~~~{\rm in}~~\Omega,\label{Euler30a}
\\
&\nabla\cdot\lr{\varrho\bol{u}}=0~~~~{\rm in}~~\Omega,\label{S0}\\
&\bol{u}\cdot\bol{n}=0,~~~~
P=P_b
~~~~{\rm on}~~\p\Omega.\label{Euler30c}
\end{align}\label{Euler30}
\end{subequations}
Now we see that for given $\bol{u}\lr{\bol{x}}$ and $P\lr{\bol{x}}$ solving \eqref{Euler30a} with boundary conditions \eqref{Euler30c}, the external potential $V\lr{\bol{x}}$ can be obtained from $V=-P-\bol{u}^2/2-h$, where the function $h\lr{\varrho}$ is assumed given and $\varrho\lr{\bol{x}}$ solves \eqref{S0} (provided that such solution $\varrho\lr{\bol{x}}$ exists for given $\bol{u}\lr{\bol{x}}$ under some appropriate boundary conditions). 
System \eqref{Euler30} can therefore be regarded as a system of $3$ equations for the $4$ unknowns $\lr{\bol{u},P}$. The resulting governing equations are given by system \eqref{sys1}. 

The same governing equations \eqref{sys1} can be obtained from system \eqref{Euler} without the external potential $V$ if a source term $S$ is included in the continuity equation. 
Indeed, setting $V=0$ and $-P=\bol{u}^2/2+h$, 
system \eqref{Euler} becomes
\begin{subequations}
\begin{align}
&\frac{\p\bol{u}}{\p t}=\bol{u}\cp\lr{\nabla\cp\bol{u}}+\nabla P~~~~{\rm in}~~\Omega,
\\
&\frac{\p\varrho}{\p t}=-\nabla\cdot\lr{\varrho\bol{u}}+S~~~~{\rm in}~~\Omega,\\
&\bol{u}\cdot\bol{n}=0,~~~~
P=P_b
~~~~{\rm on}~~\p\Omega.
\end{align}\label{redEuler}
\end{subequations}
Dropping time derivatives  
leads to 
\begin{subequations}
\begin{align}
&\lr{\nabla\cp\bol{u}}\cp\bol{u}=\nabla P~~~~{\rm in}~~\Omega,\label{Euler3a}
\\
&\nabla\cdot\lr{\varrho\bol{u}}=S~~~~{\rm in}~~\Omega,\label{S}\\
&\bol{u}\cdot\bol{n}=0,~~~~
P=P_b
~~~~{\rm on}~~\p\Omega.\label{Euler3c}
\end{align}\label{Euler3}
\end{subequations}
Now observe that in this system for given $\bol{u}\lr{\bol{x}}$ and $P\lr{\bol{x}}$ solving \eqref{Euler3a} with boundary conditions \eqref{Euler3c}, the density $\varrho\lr{\bol{x}}$ can be obtained from the relation $h\lr{\varrho}=-P-\bol{u}^2/2$. Hence, the density source $S\lr{\bol{x}}$ is completely determined by equation \eqref{S}. System \eqref{Euler3} can therefore be regarded as a system of $3$ equations for the $4$ unknowns $\lr{\bol{u},P}$. The resulting governing equations are given by system \eqref{sys1}. 

An alternative path to arrive at system \eqref{sys1} is to assume that the mass density is a positive constant, $\varrho=\varrho_0\in\mathbb{R}_{>0}$, set $V=0$,  and treat $P'\lr{\bol{x}}$ as an unknown. In this case, the governing equations are
\begin{subequations}
\begin{align}
&\frac{\p\bol{u}}{\p t}=-\bol{u}\cdot\nabla\bol{u}-\frac{1}{\varrho_0}\nabla P'~~~~{\rm in}~~\Omega,
\\
&\nabla\cdot\bol{u}=\frac{S}{\varrho_0}~~~~{\rm in}~~\Omega,\\
&\bol{u}\cdot\bol{n}=0,~~~~
\frac{P'}{\varrho_0}+\frac{\bol{u}^2}{2}=-P_b
~~~~{\rm on}~~\p\Omega,
\end{align}\label{Euler32}
\end{subequations}
Introducing the variable $P=-P'/\varrho_0-\bol{u}^2/2$, 
steady solutions of equation \eqref{Euler32} are now described by 
\sys{
&\lr{\nabla\cp\bol{u}}\cp\bol{u}=\nabla P~~~~{\rm in}~~\p\Omega,\\
&\nabla\cdot\bol{u}=\frac{S}{\varrho_0}~~~~{\rm in}~~\Omega,\\
&\bol{u}\cdot\bol{n}=0,~~~~P=P_b~~~~{\rm on}~~\p\Omega,
}{sysEuler2}
which leads to \eqref{sys1} once the source term $S=\varrho_0\nabla\cdot\bol{u}$ is eliminated from the system. 

It is important to emphasize that there is no corresponding magnetohydrostatic configuration associated with the steady flows described by system \eqref{sys1}. This is because Maxwell's equation \(\nabla\cdot\bol{B}=0\) prohibits source terms on the right-hand side, as magnetic monopoles are not predicted by electromagnetic theory. Moreover, even within the fluid dynamics framework of system \eqref{sys1}, there is no guarantee that a solution $\lr{\bol{u},P}$, with an external potential \(V = -P - \frac{\bol{u}^2}{2} - h\) or a source term \(S = \nabla\cdot(\varrho\bol{u})\), can be physically realized. Therefore, the solutions of system \eqref{sys1} obtained in this study should be interpreted within this theoretical context.


\section{Constructive proof of theorem 1}
System \eqref{sys1} can be reduced to a system of equations \eqref{sys7} for a set of Clebsch potentials \(\{\alpha, \Psi, \Theta, \zeta, \Phi\}\) and the pressure $P$ using the Clebsch parametrization 
\eq{\bol{u} = \nabla\Phi + \Psi\nabla\Theta + \alpha\nabla\zeta,\label{clu}} 
of the velocity field. The technical details of this reduction are provided in Appendix A. In this context, $\lr{\Psi, \Theta, \zeta}$ are treated as a coordinate system, with tangent vectors $\lr{\p_{\Psi},\p_{\Theta},\p_{\zeta}}$ and the corresponding metric tensor components denoted by \(g_{\Psi\Psi}\), \(g_{\Psi\Theta}\), \(g_{\Psi\zeta}\), \(g_{\Theta\Theta}\), \(g_{\Theta\zeta}\), and \(g_{\zeta\zeta}\) (see Eq. \eqref{mt} in Appendix A for their definitions).


If a solution of the reduced system \eqref{sys7} exists, the vector field $\bol{u}$ will be tangent to level sets of $P$. 
Assuming that \(\nabla P \neq \bol{0}\) in \(\bar{\Omega}\), 
we can use \(P\) as a coordinate and set \(\Psi = P\).
Under these assumptions, the surviving equations in system \eqref{sys7} are
\sys{
&\alpha_{\Theta}=0~~~~{\rm in}~~\Omega,\\
&\frac{\alpha+\Phi_{\zeta}+g_{\Theta \zeta}}{g_{\zeta\zeta}-\alpha_{\Psi}g_{\Theta \zeta}}\lr{g_{\Psi \zeta}-\alpha_{\Psi}g_{\Psi\Theta}}=g_{\Psi\Theta}+\Phi_{\Psi}~~~~{\rm in}~~\Omega,\\
&\frac{\alpha+\Phi_{\zeta}+g_{\Theta \zeta}}{g_{\zeta\zeta}-\alpha_{\Psi}g_{\Theta \zeta}}\lr{g_{\Theta \zeta}-\alpha_{\Psi}g_{\Theta\Theta}}=\Phi_{\Theta}+\Psi+g_{\Theta\Theta}~~~~{\rm in}~~\Omega,\\
&\Psi=\Psi_b
~~~~{\rm on}~~{\p\Omega}.
}{sys8}
Here, a lower index applied to a function denotes partial differentiation, e.g. $\alpha_{\Theta}=\p\alpha/\p\Theta$. 
We remark that in this system the unknowns are $\alpha$, $\Psi$, $\Theta$, $\zeta$, and $\Phi$, with the boundary condition that the toroidal surface $\p\Omega$ corresponds to a level set of the coordinate $\Psi$. 

Since there are $3$ equations for $5$ unknowns, we expect enough freedom to fix some of the unknowns. 
However, for the time being we shall leave such freedom and consider all quantities as variables of the problem to facilitate its solution. 
Furthermore, below we demand all quantities to be integrable or differentiable as required by the governing equations. 
Then, a possible way of solving system \eqref{sys8} consists in finding $\alpha$, $\Psi$,  $\Theta$, $\zeta$, and $\Phi$ such that in $\Omega$ the following relationships hold   
\sys{
&\alpha_{\Theta}=0,\\
&\alpha+\Phi_{\zeta}+g_{\Theta\zeta}=0,\label{alpha}\\
&\Phi_{\Psi}+g_{\Psi\Theta}=0,\\
&\Phi_{\Theta}+\Psi+g_{\Theta\Theta}=0.
}{sys9}
Suppose that $\Theta\in[a,b)$ for some $a,b\in\mathbb{R}$. 
Since $\Theta$ only appears through its spatial derivatives in $\bol{u}$, we can redefine $\Theta$ by subtracting a constant so that $\Theta\in[0,c)$ with $c=b-a$. 
Then, the last equation in \eqref{sys9} gives
\eq{
\Phi=-\Psi\Theta-\int_0^{\Theta}g_{\Theta\Theta}d\Theta'+\chi\lr{\Psi,\zeta},\label{Phi}
}
where $\chi\lr{\Psi,\zeta}=\Phi\lr{\Psi,0,\zeta}$. 
Using this expression, we find
\sys{
&\Phi_{\Psi}=-\Theta-\int_0^{\Theta}\frac{\p g_{\Theta\Theta}}{\p\Psi}d\Theta'+\chi_{\Psi},\\
&\Phi_{\Theta\zeta}=-\frac{\p g_{\Theta\Theta}}{\p\zeta}. 
}{sys10}
On the other hand, since $\alpha$ is an arbitrary function of $\Psi$ and $\zeta$, the second equation in system \eqref{sys9} implies that
\eq{
\Phi_{\Theta\zeta}=-\frac{\p g_{\Theta\zeta}}{\p\Theta}.
}
Hence, system \eqref{sys8} can be reduced to the following two coupled equations for $\Theta$ and $\zeta$, 
\sys{
&\Theta+\int_0^{\Theta}\frac{\p g_{\Theta\Theta}}{\p\Psi}d\Theta'-g_{\Psi\Theta}=\chi_{\Psi},\label{chipsi}\\
&\frac{\p g_{\Theta\Theta}}{\p\zeta}=\frac{\p g_{\Theta\zeta}}{\p\Theta}.
}{sys11}
Recalling that $\chi_{\Psi}$ is an arbitrary function of $\Psi$ and $\zeta$, this system is equivalent to 
\sys{
&\frac{\p g_{\Psi\Theta}}{\p\Theta}=1+\frac{\p g_{\Theta\Theta}}{\p\Psi},\\
&\frac{\p g_{\Theta\Theta}}{\p\zeta}=\frac{\p g_{\Theta\zeta}}{\p\Theta}.
}
{sys12}
Note that if one can find coordinates $\Psi$,  $\Theta$, and $\zeta$ satisfying \eqref{sys12}, the function $\chi_{\Psi}$ can be obtained from equation \eqref{chipsi}, 
which in turns gives $\Phi$ (eq. \eqref{Phi}) and $\alpha$ (eq. \eqref{alpha}) in consistency with system \eqref{sys9}. The functions $\alpha$, $\Psi$, $\Theta$, $\zeta$, and $\Phi$ then produce a solution $\bol{u}$ of the original problem \eqref{sys1} in the Clebsch  form \eqref{clu}. 
Of course, in this construction the coordinate $\Psi$ must be such that one of its contours corresponds to the boundary $\p\Omega$ to be consistent with boundary conditions. 

Recalling the definition of tangent vectors and their relationship with the metric coefficients, system \eqref{sys12} can be further transformed into a system for the map $\bol{x}\lr{\Psi,\Theta,\zeta}$ according to
\sys{
&\frac{\p\bol{x}}{\p\Psi}\cdot\frac{\p^2\bol{x}}{\p\Theta^2}=1+\frac{\p\bol{x}}{\p\Theta}\cdot\frac{\p^2\bol{x}}{\p\Psi\p\Theta},\\
&\frac{\p\bol{x}}{\p\Theta}\cdot\frac{\p^2\bol{x}}{\p\Theta\p\zeta}=\frac{\p\bol{x}}{\p\zeta}\cdot\frac{\p^2\bol{x}}{\p\Theta^2}.
}{sys13}
In terms of Christoffel symbols $\Gamma_{ijk}=\frac{1}{2}\lr{\frac{\p g_{ij}}{\p x^k}+\frac{\p g_{ik}}{\p x^j}-\frac{\p g_{jk}}{\p x^i}}$ 
for the metric tensor $g_{ij}$ with  coordinates $x^1=\Psi$, $x^2=\Theta$, and $x^3=\zeta$, system \eqref{sys13} can be shown to be equivalent to
\sys{
&\Gamma_{\Psi\Theta\Theta}=1+\Gamma_{\Theta\Psi\Theta},\\&\Gamma_{\Theta\zeta\Theta}=\Gamma_{\zeta\Theta\Theta},
}{Chris1}
where we defined $\Gamma_{\Psi\Theta\Theta}=\Gamma_{122}$, $\Gamma_{\Theta\Psi\Theta}=\Gamma_{212}$, $\Gamma_{\Theta\zeta\Theta}=\Gamma_{232}$, and $\Gamma_{\zeta\Theta\Theta}=\Gamma_{322}$. 
The above facts can be summarized in the following statement:
\begin{proposition}
Let $\lr{\Psi,\Theta,\zeta}$ be a  coordinate system in a smooth bounded toroidal domain $\Omega\subset\mathbb{R}^3$ satisfying  \eqref{sys13}. Suppose that the boundary $\p\Omega$ corresponds to a contour of the function $\Psi$. 
Take $\Phi$,  $\alpha$, and $\chi_{\Psi}$ as defined by equations \eqref{Phi},  \eqref{alpha}, and \eqref{chipsi}. Then, the vector field $\bol{u}=\nabla\Phi+\Psi\nabla\Theta+\alpha\nabla\zeta$ solves \eqref{sys1} with $P=\Psi$. Furthermore, 
\eq{\bol{u}=-g_{\Psi\Theta}\nabla\Psi-g_{\Theta\Theta}\nabla\Theta-g_{\Theta\zeta}\nabla\zeta=-\p_{\Theta}.}   
\end{proposition}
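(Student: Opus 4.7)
The plan is to proceed in two stages. First I would verify that the data $(\alpha,\Psi,\Theta,\zeta,\Phi)$ constructed from the hypotheses satisfies the auxiliary system \eqref{sys9}; since \eqref{sys9} implies \eqref{sys8}, and \eqref{sys8} is precisely the Clebsch reduction of \eqref{sys1} carried out in Appendix A, this establishes that $\bol{u}=\nabla\Phi+\Psi\nabla\Theta+\alpha\nabla\zeta$ solves \eqref{sys1} with $P=\Psi$. Second, I would rewrite $\bol{u}$ in the two claimed alternative forms by direct substitution, after which the boundary condition $\bol{u}\cdot\bol{n}=0$ comes for free because $\partial\Omega$ is a level set of $\Psi$ and $\nabla\Psi\cdot\partial_\Theta=0$ identically.

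For the first stage, the central point is that the hypothesis \eqref{sys13} (equivalently \eqref{sys12}) is exactly what makes the construction consistent. Its first relation asserts that the left-hand side of \eqref{chipsi} is $\Theta$-independent, so $\chi_\Psi$ is legitimately a function of $(\Psi,\zeta)$ alone; its second relation yields $\Phi_{\Theta\zeta}=-\partial_\Theta g_{\Theta\zeta}$ from the representation \eqref{Phi}, which upon integration in $\Theta$ permits $\alpha$, defined by \eqref{alpha}, to be chosen independent of $\Theta$, fulfilling $\alpha_\Theta=0$. The remaining three relations of \eqref{sys9} then hold tautologically: $\Phi_\Theta+\Psi+g_{\Theta\Theta}=0$ from the defining formula \eqref{Phi}; $\Phi_\Psi+g_{\Psi\Theta}=0$ from combining the $\Psi$-derivative of \eqref{Phi} with the choice of $\chi_\Psi$ prescribed by \eqref{chipsi}; and $\alpha+\Phi_\zeta+g_{\Theta\zeta}=0$ by definition \eqref{alpha}.

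For the second stage, substituting \eqref{Phi} into $\bol{u}=\nabla\Phi+\Psi\nabla\Theta+\alpha\nabla\zeta$ and expanding $\nabla\int_0^\Theta g_{\Theta\Theta}\,d\Theta'$ by the Leibniz rule, the coefficient of $\nabla\Psi$ collapses to $-g_{\Psi\Theta}$ via \eqref{chipsi}; the coefficient of $\nabla\Theta$ is trivially $-g_{\Theta\Theta}$ after the $-\Psi$ from $\nabla\Phi$ cancels the explicit $+\Psi$; and the coefficient of $\nabla\zeta$ collapses to $-g_{\Theta\zeta}$ upon invoking \eqref{alpha} together with the $\zeta$-derivative of \eqref{Phi}. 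The resulting vector $-g_{\Psi\Theta}\nabla\Psi-g_{\Theta\Theta}\nabla\Theta-g_{\Theta\zeta}\nabla\zeta$ is then identified with $-\partial_\Theta$ by the standard identity $\partial_i=\sum_j g_{ij}\nabla x^j$ valid in any curvilinear coordinate system, itself an immediate consequence of the duality $\nabla x^i\cdot\partial_j=\delta^i_j$.

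The main obstacle is not conceptual but notational: one must carefully track derivatives of cumulative integrals in $\Theta$ and verify that the integration ``constants'' $\chi_\Psi$ and $\alpha$ are legitimately independent of $\Theta$. This is precisely what the two compatibility conditions in \eqref{sys13} guarantee. Once these compatibilities are in place, the remainder of the proof reduces to a sequence of routine algebraic substitutions, and the identification $\bol{u}=-\partial_\Theta$ follows as a clean byproduct of the tensor-index bookkeeping.
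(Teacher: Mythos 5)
Your proposal is correct and follows exactly the route the paper intends: the paper's own proof is a one-line deferral to the construction in Section 3 (verify \eqref{sys9} from \eqref{Phi}, \eqref{alpha}, \eqref{chipsi} together with the compatibility conditions \eqref{sys12}/\eqref{sys13}, deduce \eqref{sys8} and hence \eqref{sys1} via the Clebsch reduction of Appendix A, and obtain $\bol{u}=-\p_\Theta$ from the identity $\p_\Theta=g_{\Theta\Psi}\nabla\Psi+g_{\Theta\Theta}\nabla\Theta+g_{\Theta\zeta}\nabla\zeta$). Your write-up simply supplies the details the paper leaves implicit, including the correct observation that the two equations of \eqref{sys13} are precisely the conditions ensuring $\chi_\Psi$ and $\alpha$ are $\Theta$-independent.
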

\begin{proof}
The proof of this statement directly follows from the identities satisfied by the coordinates $\lr{\Psi,\Theta,\zeta}$ and the functions $\Phi$, $\alpha$, and $\chi_{\Psi}$. 
\end{proof}

Let $\lr{r,\varphi,z}$ denote cylindrical coordinates. 
Consider the axially symmetric toroidal surfaces corresponding to level sets of the function $\Psi=\Psi_0-\lr{r-r_0}^2-z^2$ with $\Psi_0,r_0\in\mathbb{R}_{>0}$.  
One can verify that system \eqref{sys13} admits the solution
\sys{
&x=\lr{r_0+\sqrt{\Psi_0-\Psi}\cos{\Theta}}\cos{\zeta},\\
&y=\lr{r_0+\sqrt{\Psi_0-\Psi}\cos{\Theta}}\sin{\zeta},\\
&z=\sqrt{\Psi_0-\Psi}\sin{\Theta},}
{sys14}
with corresponding inverse transformation 
\sys{
&\Psi=\Psi_0-\lr{r-r_0}^2-z^2,\\
&\Theta=\arctan\lr{\frac{z}{r-r_0}},\\
&\zeta=\varphi.
}{sys15}
Here, the coordinate $\Theta$ is the poloidal angle. 
This result implies that one can solve \eqref{sys1} in terms of a vector field  $\bol{u}$ and a pressure field  $P=\Psi$ having nested axially symmetric level sets. 
Let $\epsilon>0$ be a real constant. 
In order to break the axial symmetry of the function $\Psi$, we note that the perturbed transformation
\sys{
&x=\lr{r_0+\sqrt{\Psi_0-\Psi}\cos\Theta}\cos\zeta+\epsilon\delta x\lr{\zeta},\\
&y=\lr{r_0+\sqrt{\Psi_0-\Psi}\cos\Theta}\sin\zeta+\epsilon\delta y\lr{\zeta}\\
&z=\sqrt{\Psi_0-\Psi}\sin\Theta,
}
{sys15b}
remains compatible with system \eqref{sys13} provided that the perturabtions $\delta x\lr{\zeta}$ and $\delta y\lr{\zeta}$ satisfy
\eq{
\delta x_{\zeta}=f\lr{\zeta}\sin\zeta ,~~~~\delta y_{\zeta}=-f\lr{\zeta}\cos\zeta,\label{delta}
}
for some $f\lr{\zeta}$. 
For example, setting $f\lr{\zeta}=\sin^2\zeta$, we find 
\sys{
&x=\lr{r_0+\sqrt{\Psi_0-\Psi}\cos\Theta}\cos\zeta-\epsilon\cos\zeta\lr{1-\frac{1}{3}\cos^2\zeta},\\
&y=\lr{r_0+\sqrt{\Psi_0-\Psi}\cos\Theta}\sin\zeta-\frac{1}{3}\epsilon\sin^3\zeta\\
&z=\sqrt{\Psi_0-\Psi}\sin\Theta.
}{sys16}
One can verify that the transformation \eqref{sys16} satisfies
\sys{
&\frac{\p\bol{x}}{\p\Psi}=\lrc{-\frac{\cos\zeta\cos\Theta}{2\sqrt{\Psi_0-\Psi}},-\frac{\sin\zeta\cos\Theta}{2\sqrt{\Psi_0-\Psi}},-\frac{\sin\Theta}{2\sqrt{\Psi_0-\Psi}}},\\
&\frac{\p\bol{x}}{\p\zeta}=\lrc{-\sin\zeta\lr{r_0-\epsilon\sin^2\zeta+\sqrt{\Psi_0-\Psi}\cos\Theta},\cos\zeta\lr{r_0-\epsilon\sin^2\zeta+\sqrt{\Psi_0-\Psi}\cos\Theta},0}\\
&\frac{\p\bol{x}}{\p\Theta}=\lrc{-\cos\zeta\sin\Theta\sqrt{\Psi_0-\Psi},-\sin\zeta\sin\Theta\sqrt{\Psi_0-\Psi},\cos\Theta\sqrt{\Psi_0-\Psi}}\label{xtheta}\\
&\frac{\p^2\bol{x}}{\p\Psi\p\Theta}=\lrc{\frac{\cos\zeta\sin\Theta}{2\sqrt{\Psi_0-\Psi}},\frac{\sin\zeta\sin\Theta}{2\sqrt{\Psi_0-\Psi}},-\frac{\cos\Theta}{2\sqrt{\Psi_0-\Psi}}}\\
&\frac{\p^2\bol{x}}{\p\zeta\p\Theta}=\lrc{\sin\zeta\sin\Theta\sqrt{\Psi_0-\Psi},-\cos\zeta\sin\Theta\sqrt{\Psi_0-\Psi},0}\\
&\frac{\p^2\bol{x}}{\p\Theta^2}=\lrc{-\cos\zeta\cos\Theta\sqrt{\Psi_0-\Psi},-\sin\zeta\cos\Theta\sqrt{\Psi_0-\Psi},-\sin\Theta\sqrt{\Psi_0-\Psi}}.
}
{sys16a}
These expressions can be used to show that system \eqref{sys13} is satisfied  by the transformation \eqref{sys16}.  


\begin{proposition}
Suppose that $\Psi\in(-\infty,\Psi_0)$, 
$\Theta\in [0,2\pi)$, and $\zeta\in [0,2\pi)$ and set $\bol{y}=\lr{\Psi,\Theta,\zeta}$.  Define 
\eq{M=\lrc{
\bol{y}\in(-\infty,\Psi_0)\times[0,2\pi)^2:r_0+\sqrt{\Psi_0-\Psi}\cos\Theta-\epsilon\sin^2\zeta> 0}.} 
Then, 
the coordinate transformation $T\lr{\bol{y}}:M\rightarrow N\subset\mathbb{R}^3$ defined by equation \eqref{sys16} is smooth. 
Furthermore, it is  invertible with smooth inverse, i.e. for every $\bol{y}\in M$ there exist a neighborhood $U\subset M$ of $\bol{y}$ and a smooth  inverse transformation $T^{-1}\lr{\bol{x}}:V\rightarrow U$, with $V\subset N$ the image of $U$ under the map $T$, such that $T^{-1}\lr{T\lr{\bol{y}}}=\bol{y}$. 
In addition, the image $N$ satisfies 
\eq{
N'=
\lrc{\bol{x}\in\mathbb{R}^3:r>\epsilon\lr{1+\sqrt{2}}}\subset N.\label{N'}
}
\end{proposition}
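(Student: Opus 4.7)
The plan is to verify the three assertions in order---smoothness, local invertibility with smooth local inverse, and the inclusion $N'\subset N$---relying on the explicit formulas already recorded in \eqref{sys16} and \eqref{sys16a}.

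\textbf{Smoothness and local invertibility.} Smoothness of $T$ on $M$ is immediate: since $\Psi<\Psi_0$ there, the factor $\sqrt{\Psi_0-\Psi}$ is smooth, and the remaining building blocks are trigonometric polynomials. For the inverse function theorem I would compute the Jacobian determinant directly from \eqref{sys16a}. Noting that $\partial_\Theta\bol{x}\times\partial_\zeta\bol{x}$ is antiparallel to the radial unit vector $(\cos\zeta\cos\Theta,\sin\zeta\cos\Theta,\sin\Theta)$---which is also the direction of $\partial_\Psi\bol{x}$ up to a sign---a short calculation yields
\[
\det\bigl(\partial_\Psi\bol{x},\partial_\Theta\bol{x},\partial_\zeta\bol{x}\bigr)=\tfrac{1}{2}\bigl(r_0-\epsilon\sin^2\zeta+\sqrt{\Psi_0-\Psi}\cos\Theta\bigr),
\]
which is strictly positive on $M$ by the defining condition. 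The inverse function theorem then provides a smooth local inverse at each $\bol{y}\in M$.

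\textbf{Surjectivity onto $N'$.} Fix $\bol{x}=(x,y,z)$ with $r:=\sqrt{x^2+y^2}>\epsilon(1+\sqrt{2})$ and write $(x,y)=r(\cos\varphi,\sin\varphi)$. Setting $\delta x(\zeta):=-\epsilon\cos\zeta+\tfrac{\epsilon}{3}\cos^3\zeta$, $\delta y(\zeta):=-\tfrac{\epsilon}{3}\sin^3\zeta$ and $R:=r_0+\sqrt{\Psi_0-\Psi}\cos\Theta$, the equation $T(\Psi,\Theta,\zeta)=\bol{x}$ is equivalent to $R\cos\zeta=x-\delta x(\zeta)$, $R\sin\zeta=y-\delta y(\zeta)$, together with $\sqrt{\Psi_0-\Psi}\,(\cos\Theta,\sin\Theta)=(R-r_0,z)$. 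I would proceed in three steps. \emph{(Step 1)} Antisymmetric elimination of $R$ yields the scalar equation
\[
G(\zeta):=x\sin\zeta-y\cos\zeta+\tfrac{\epsilon}{3}\sin(2\zeta)=0,
\]
and since $G(\varphi\pm\pi/2)=\pm r-\tfrac{\epsilon}{3}\sin(2\varphi)$, the intermediate value theorem produces a root $\zeta^*\in(\varphi-\pi/2,\varphi+\pi/2)$. \emph{(Step 2)} Because $G(\zeta^*)=0$ and $\cos(\varphi-\zeta^*)>0$, the vector $(x-\delta x(\zeta^*),y-\delta y(\zeta^*))$ is a positive scalar multiple of $(\cos\zeta^*,\sin\zeta^*)$; take $R>0$ to be that scalar. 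The uniform bounds $|\delta x|\le\epsilon$ and $|\delta y|\le\epsilon$ imply $\sqrt{\delta x^2+\delta y^2}\le\epsilon\sqrt{2}$, so the triangle inequality gives
\[
R\ge r-\sqrt{\delta x(\zeta^*)^2+\delta y(\zeta^*)^2}\ge r-\epsilon\sqrt{2}>\epsilon\ge\epsilon\sin^2\zeta^*,
\]
with the strict inequality holding precisely when $r>\epsilon(1+\sqrt{2})$. \emph{(Step 3)} Set $\Psi:=\Psi_0-(R-r_0)^2-z^2$ and determine $\Theta\in[0,2\pi)$ from $\sqrt{\Psi_0-\Psi}\cos\Theta=R-r_0$, $\sqrt{\Psi_0-\Psi}\sin\Theta=z$; the triple $(\Psi,\Theta,\zeta^*)$ lies in $M$ and satisfies $T(\Psi,\Theta,\zeta^*)=\bol{x}$.

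\textbf{Main obstacle.} Smoothness and the Jacobian step are essentially book-keeping. The real work is the surjectivity argument, where one must \emph{simultaneously} solve the transcendental equation $G(\zeta^*)=0$ and control the magnitude $R$ so that the candidate preimage actually lies in $M$. The threshold $\epsilon(1+\sqrt{2})$ arises as the sum of the universal bound $\sqrt{\delta x^2+\delta y^2}\le\epsilon\sqrt{2}$ and the worst-case requirement $R>\epsilon$ coming from $\sin^2\zeta^*\le 1$; a sharper analysis of the perturbation modulus would enlarge $N'$ at the cost of a less transparent constant.
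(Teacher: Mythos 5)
Your argument is correct and, on the image-inclusion step, genuinely different from (and more rigorous than) the paper's. The smoothness and Jacobian parts coincide with the paper's proof: both compute $\det\lr{\p_{\bol{y}}\bol{x}}=\tfrac{1}{2}\lr{r_0+\sqrt{\Psi_0-\Psi}\cos\Theta-\epsilon\sin^2\zeta}$, note it is nonzero on $M$ by the defining inequality, and invoke the inverse function theorem. The divergence is in proving $N'\subset N$. The paper argues by continuation from $\epsilon=0$: it observes that the unperturbed map covers $\mathbb{R}^3$ minus the $z$-axis, states that this ``suggests'' the perturbed image is $\mathbb{R}^3$ minus the degenerate locus $\rho=\epsilon\sin^2\zeta$, and then only verifies that the sufficient condition $\rho>\epsilon$ is implied by $r>\epsilon\lr{1+\sqrt{2}}$ via the same quadratic inequality $r^2-2\epsilon r>\epsilon^2$ you derive. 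You instead construct the preimage outright: eliminating the radius gives the scalar equation $G\lr{\zeta}=x\sin\zeta-y\cos\zeta+\tfrac{\epsilon}{3}\sin 2\zeta=0$ (I checked that $-\delta x\sin\zeta+\delta y\cos\zeta=\tfrac{\epsilon}{3}\sin 2\zeta$, so this is right), the sign change at $\varphi\pm\pi/2$ yields a root $\zeta^*$ by the intermediate value theorem, and the reverse triangle inequality forces $R\geq r-\epsilon\sqrt{2}>\epsilon\geq\epsilon\sin^2\zeta^*$, so the candidate point lies in $M$. This converts the paper's plausibility argument about the image into an actual existence proof, which is the part of the proposition the paper leaves softest; the price is a page of elementary trigonometry, and both routes land on the same non-optimal constant $1+\sqrt{2}$.

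Two small repairs. First, in Step~2 the positivity of $R$ deserves one more line: from $\lr{x-\delta x,\,y-\delta y}=R\lr{\cos\zeta^*,\sin\zeta^*}$, pairing with $\lr{x,y}$ gives $r^2-\lr{x\delta x+y\delta y}=Rr\cos\lr{\varphi-\zeta^*}$, whose left side is at least $r\lr{r-\epsilon\sqrt{2}}>0$; since $\cos\lr{\varphi-\zeta^*}>0$ on your interval, $R>0$ follows (the bound $R\geq r\cos\lr{\varphi-\zeta^*}-\epsilon\sqrt{2}$ alone does not rule out $R<0$ when $\zeta^*$ sits near an endpoint). Second, Step~3 fails on the perturbed toroidal axis $R=r_0$, $z=0$, where $\Psi=\Psi_0$ is excluded from $M$ and $\Theta$ is undefined; that circle lies in $N'$, so strictly the inclusion \eqref{N'} requires excising it. This defect belongs to the proposition as stated and is equally present, and unaddressed, in the paper's proof, so it is not a gap specific to your argument.
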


\begin{proof}
The smoothness of the coordinate transformation on $M$ is immediate. 
Invertibility follows by the inverse function theorem: it suffices to verify that the Jacobian determinant of the transformation does not vanish within $M$. The Jacobian matrix $\p_{\bol{y}}\bol{x}$ can be written as
\begin{equation}
\p_{\bol{y}}\bol{x}=\begin{bmatrix}
-\frac{\cos\zeta\cos\Theta}{2\sqrt{\Psi_0-\Psi}}&-\sqrt{\Psi_0-\Psi}\cos\zeta\sin\Theta&-\sin\zeta\lr{r_0+\sqrt{\Psi_0-\Psi}\cos\Theta-\epsilon\sin^2\zeta}\\
-\frac{\sin\zeta\cos\Theta}{2\sqrt{\Psi_0-\Psi}}&-\sqrt{\Psi_0-\Psi}\sin\zeta\sin\Theta&\cos\zeta\lr{r_0+\sqrt{\Psi_0-\Psi}\cos\Theta-\epsilon\sin^2\zeta}\\
-\frac{\sin\Theta}{2\sqrt{\Psi_0-\Psi}}&\sqrt{\Psi_0-\Psi}\cos\Theta&0
\end{bmatrix}.
\end{equation}
Hence, in $M$, the Jacobian determinant is
\begin{equation}
J^{-1}=\det\lr{\p_{\bol{y}}\bol{x}}=\frac{1}{2}\lr{r_0+\sqrt{\Psi_0-\Psi}\cos\Theta-\epsilon\sin^2\zeta}=\frac{\rho}{2}-\frac{1}{2}\epsilon\sin^2\zeta,\label{Jac}
\end{equation}
where we defined the perturbed polar radius 
\eq{
\rho=\sqrt{\lr{x-\epsilon\delta x}^2+\lr{y-\epsilon\delta y}^2}=\abs{r_0+\sqrt{\Psi_0-\Psi}\cos\Theta}=r_0+\sqrt{\Psi_0-\Psi}\cos\Theta.
}
Since by hypothesis $M$ does not include the set $\Sigma=\lrc{\bol{y}\in(-\infty,\Psi_0)\times[0,2\pi)^2:r_0+\sqrt{\Psi_0-\Psi}\cos\Theta\leq\epsilon\sin^2\zeta}$, the Jacobian determinant does not vanish in $M$. 
The remaining step is to characterize the image $N$ of the map $T$ as claimed in equation \eqref{N'}. 
First, observe that by choosing $\cos\Theta\cos\zeta\neq 0$, the absolute value of coordinate $x$ in \eqref{sys16}  can be made arbitrary large by taking the limit $\Psi\rightarrow -\infty$. 
Note that this limit can be taken within $M$, i.e. by avoiding the set $\Sigma$.
A similar reasoning applies to $y$ and $z$. 
This suggests that the image of $\bol{x}\lr{\Psi,\zeta,\Theta}$ is $\mathbb{R}^3$, 
minus the loci where the Jacobian determinant $J$ becomes singular. 
When $\epsilon=0$, we have $\rho=r$ so that the singular set is the vertical axis $T\lr{\Sigma}=\lrc{\bol{x}\in\mathbb{R}^3:r=0}$. 
Since $\epsilon\sin^2\zeta\leq\epsilon$, 
we have 
\eq{M'=\lrc{\bol{y}\in\lr{-\infty,\Psi_0}\cp[0,2\pi^2):\rho>\epsilon}\subset M.}
On the other hand, observe that
\eq{
\abs{\delta x}=\abs{\cos\zeta\lr{1-\frac{1}{3}\cos^2\zeta}}\leq \frac{2}{3} <1,~~~~\abs{\delta y}=\frac{1}{3}\epsilon\abs{\sin^3\zeta}\leq \frac{1}{3}<1.\label{dxdy}
}
Hence, the condition $\rho>\epsilon$ implies that
\eq{
r^2-2\epsilon\lr{x\delta x+y\delta y}+\epsilon^2\lr{\delta x^2+\delta y^2}>\epsilon^2, 
}
which can be satisfied by demanding that 
\eq{
r^2-2\epsilon r\lr{\frac{2}{3}+\frac{1}{3}}>\epsilon^2.
}
This inequality leads to the condition
\eq{
r>\epsilon\lr{1+\sqrt{2}}.
}
This result shows that the image $N$ corresponds to $\mathbb{R}^3$ minus the `perturbed vertical axis'  $\rho=\epsilon\sin^2\zeta$. In particular, the coordinate change $\bol{x}\lr{\Psi,\zeta,\Theta}$ is well defined in 
\eq{N'=\lrc{\bol{x}\in\mathbb{R}^3:r>\epsilon\lr{1+\sqrt{2}}}\subset N.}
\end{proof}
We now wish to show that, for small enough $\epsilon>0$, level sets of $\Psi$ are diffeomorphic to the torus $\mathbb{T}^2$. To this end, first note that from \eqref{sys16} we have 
\eq{
\Psi=\Psi_0-\lr{\rho-r_0}^2-z^2=\Psi_0-\lrc{\sqrt{\lrs{x+\epsilon\cos\zeta\lr{1-\frac{1}{3}\cos^2\zeta}}^2+\lr{y+\frac{1}{3}\epsilon\sin^3\zeta}^2}-r_0}^2-z^2.\label{Psi}
}
Furthermore, we have the following:



\begin{lemma} 
Let $\Omega'\subset\mathbb{R}^n$,  denote a bounded open set. Suppose that the function $f\in C^{\infty}\lr{\bar{\Omega}'}$ has no critical points, i.e. $\nabla f\neq\bol{0}$ in $\bar{\Omega}'$. 
Take $g\in C^{\infty}\lr{\bar{\Omega}'}$. 
Then, for small enough $\epsilon>0$, $f+\epsilon g$ and $f$ are  diffeomorphic in any closed set $\bar{\Omega}\subset\Omega'$: 
there exists a diffeomorphism $\Phi: 
\bar{\Omega}\rightarrow \bar{\Gamma}\subset\Omega'$, 
such that
\eq{
\Phi^{\ast}\lr{f+\epsilon g}=f,
}
where $\Phi^{\ast}$ denotes the pullback operator. 
\end{lemma}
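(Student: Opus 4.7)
The plan is to apply Moser's path (flow) method. Define the one-parameter family $f_t = f + t\epsilon g$ for $t \in [0,1]$, so that $f_0 = f$ and $f_1 = f + \epsilon g$. I seek a time-dependent smooth vector field $X_t$ defined on a neighborhood of $\bar{\Omega}$ whose flow $\phi_t$ satisfies $\phi_t^{\ast} f_t = f_0 = f$ identically for $t \in [0,1]$. Differentiating this relation with respect to $t$ and pulling back reduces the problem to the pointwise cohomological equation
\[
\nabla f_t \cdot X_t = -\epsilon g.
\]

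Since $\bar{\Omega}'$ is compact and $\nabla f$ is continuous and nowhere zero on it, $|\nabla f|$ is bounded below by a positive constant $m > 0$. By continuity in $\epsilon$, for $\epsilon$ sufficiently small one also has $|\nabla f_t| \geq m/2$ uniformly in $t \in [0,1]$ and $x \in \bar{\Omega}'$. This lets me solve the cohomological equation explicitly by the gradient ansatz
\[
X_t = -\frac{\epsilon g}{|\nabla f_t|^2}\,\nabla f_t,
\]
which is smooth in $(t,x)$ and of size $O(\epsilon)$ uniformly on $[0,1]\times \bar{\Omega}'$.

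Next I flow $X_t$ for time $1$ starting from points of $\bar{\Omega}$. Because $\bar{\Omega} \subset \Omega'$ is compact and $\Omega'$ is open, the distance $\delta := d(\bar{\Omega},\partial \Omega')$ is strictly positive. The uniform bound $|X_t| = O(\epsilon)$ then guarantees that, for $\epsilon$ small enough, the flow $\phi_t$ is well defined on $[0,1] \times \bar{\Omega}$ and never leaves the $\delta$-neighborhood of $\bar{\Omega}$ inside $\Omega'$. Setting $\Phi := \phi_1$ yields a smooth map $\Phi : \bar{\Omega} \to \bar{\Gamma} := \phi_1(\bar{\Omega}) \subset \Omega'$, which is a diffeomorphism onto its image, the inverse being obtained by flowing $X_t$ backwards from $t=1$ to $t=0$. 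By construction,
\[
\Phi^{\ast}(f + \epsilon g) = \phi_1^{\ast} f_1 = f_0 = f,
\]
as required.

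The only mildly delicate point is to pick $\epsilon$ uniformly small enough that (i) $|\nabla f_t|$ stays bounded away from zero on $\bar{\Omega}'$ throughout the deformation, so that the ansatz for $X_t$ is genuinely smooth, and (ii) no trajectory leaves $\Omega'$, so that $f+\epsilon g$ is available to be pulled back at every point of the trajectory. Both conditions follow at once from the compactness of $\bar{\Omega}'$ and the strict positivity of $\delta$ and of $\min_{\bar{\Omega}'}|\nabla f|$, so the required smallness of $\epsilon$ depends only on $\sup_{\bar{\Omega}'}|g|$, $\min_{\bar{\Omega}'}|\nabla f|$, and $\delta$. Once these bounds are secured, everything reduces to standard ODE theory for smooth, uniformly bounded time-dependent vector fields.
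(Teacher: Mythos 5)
Your proof is correct and follows essentially the same route as the paper: Moser's method with the explicit vector field $X_t=-\epsilon g\,\nabla f_t/\abs{\nabla f_t}^2$, using compactness of $\bar{\Omega}'$ and $\min_{\bar{\Omega}'}\abs{\nabla f}>0$ to keep the denominator away from zero. The only divergence is in securing existence of the flow up to $t=1$: you do this cleanly via the uniform $O(\epsilon)$ bound on $X_t$ together with the positive distance from $\bar{\Omega}$ to $\p\Omega'$, whereas the paper instead truncates at a minimal existence time $t_\epsilon$ and rescales $\epsilon$ accordingly; both resolutions are valid.
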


\begin{proof}
We apply Moser's method: we look for a coordinate change generated by a vector field $X_t\in T\Omega'$ such that
\eq{
X_t\lr{\Phi_t\lr{\bol{x}_0}}=\frac{d}{dt}\Phi_t\lr{\bol{x}_0},~~~~\Phi_0\lr{\bol{x}_0}=\bol{x}_0,\label{dxdt}
}
and
\eq{
\Phi^{\ast}_t\lr{f+t\epsilon' g}=f,
}
where $\Phi_t$ is the flow map, $\bol{x}_0\in\Omega'$, $t\in[0,1]$, and $\epsilon'\in\mathbb{R}_{>0}$. Consider the function $f+t\epsilon'  g$. For small enough $\epsilon'>0$ the quantity $\abs{\nabla\lr{f+t\epsilon' g}}^2 
$ can be made different from zero, since both $\abs{\nabla f}$ and $\abs{\nabla g}$ attain their extrema in $\bar{\Omega}'$ and by hypothesis ${\rm min}_{\bar{\Omega}'}\abs{\nabla f}>0$. Next, observe that
\eq{
\frac{d}{dt}\Phi^{\ast}\lr{f+t\epsilon' g}=X_t\cdot\nabla\lr{f+t\epsilon' g}+\epsilon' g.
}
This quantity can be made zero by choosing
\eq{
X_t=-\epsilon' g\frac{\nabla\lr{f+t\epsilon' g}}{\abs{\nabla\lr{f+t\epsilon' g}}^2}.
}
Since $\abs{\nabla\lr{f+t\epsilon' g}}\neq 0$ and all quantities involved are smooth in ${\Omega}'\times[0,1]
$, the vector field $X_t$ is smooth in ${\Omega}'\times[0,1]
$. Then, from the 
Cauchy-Lipschitz theorem, for each $\bol{x}_0\in\Omega'$ there exists a  $t_{\bol{x}_0}>0$ such that equation \eqref{dxdt} has a solution $\Phi_t\lr{\bol{x}_0}$ in $t\in[0,t_{\bol{x}_0}]$ and the integral curve belongs to $\Omega'$. 
Let 
$t_{\epsilon}>0$ 
denote the minimum of the maximum interval of existence in $\bar{\Omega}$, where  $\bar{\Omega}\subset\Omega'$ is a closed set. Note that the solution  $\Phi_t\lr{\bol{x}_0}$ exists in $\lrs{0,t_{\epsilon}}$ for all $\bol{x}_0\in\bar{\Omega}$. 
Then, the desired diffeomorphism is given by $\Phi=\Phi_{t_\epsilon}$ if $t_{\epsilon}<1$, and by $\Phi=\Phi_1$ if $t_{\epsilon}\geq 1$. Furthermore, the constant $\epsilon$ is given by   $\epsilon=t_{\epsilon}\epsilon'$ in the first case, and by $\epsilon=\epsilon'$ in the second one. Finally, we have $\bar{\Gamma}=\Phi\lr{\bar{\Omega}}$.  
\end{proof}

By using lemma 1, we can prove the following: 
\begin{proposition}
Let $\Omega\subset N'$ denote a bounded region. 
Then, for small enough $\epsilon>0$, level sets of $\Psi=\Psi_0-\lr{\rho-r_0}^2-z^2$ defined in eq. \eqref{Psi} are diffeomorphic to level sets of $\Psi_A=\Psi_0-\lr{r-r_0}^2-z^2$ in $\Omega$.
\end{proposition}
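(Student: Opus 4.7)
The strategy is to apply Lemma~1 with $f=\Psi_A$ and $f+\epsilon g=\Psi$, which will directly produce a diffeomorphism sending level sets of $\Psi_A$ to those of $\Psi$. The non-trivial preparation is to regard $\Psi$ as a genuinely smooth function of $(x,y,z)$: the expression \eqref{Psi} is only implicit, because the coordinate $\zeta$ on the right-hand side is itself determined by inverting the map \eqref{sys16}. Concretely, $\Psi(x,y,z;\epsilon)$ is simply the first component of $T^{-1}(\bol{x};\epsilon)$.

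First, I would upgrade Proposition~2 to a parameter-dependent inverse function theorem to establish that $T^{-1}(\bol{x};\epsilon)$, and therefore $\Psi(x,y,z;\epsilon)$, is jointly smooth on a neighborhood of $\bar{\Omega}\times\{0\}$. The key input is the Jacobian formula \eqref{Jac}: on $\bar{\Omega}\subset N'$ the quantity $\rho-\epsilon\sin^2\zeta$ is bounded away from zero uniformly for all sufficiently small $\epsilon\geq 0$. At $\epsilon=0$ the transformation \eqref{sys16} collapses to \eqref{sys14}, so $\Psi(x,y,z;0)=\Psi_A(x,y,z)$, and a first-order Taylor expansion in $\epsilon$ yields the decomposition $\Psi=\Psi_A+\epsilon G$ with $G\in C^{\infty}(\bar{\Omega})$ whose $C^1$ norm stays bounded as $\epsilon\to 0$.

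Second, I would verify the hypothesis of Lemma~1 for $f=\Psi_A$: since $\nabla\Psi_A=-2(r-r_0)\nabla r-2z\nabla z$ vanishes only on the circle $\{r=r_0,\,z=0\}$, the condition $\min_{\bar{\Omega}}\abs{\nabla\Psi_A}>0$ holds whenever $\bar{\Omega}$ avoids that axis, which is automatic in the geometric setting in which level sets of $\Psi_A$ are nested tori. Lemma~1, whose Moser-method proof immediately accommodates a smoothly $\epsilon$-dependent $g$ (the vector field $X_t=-\epsilon g\nabla(\Psi_A+t\epsilon g)/\abs{\nabla(\Psi_A+t\epsilon g)}^2$ remains smooth for small $\epsilon$ thanks to the uniform lower bound on $\abs{\nabla\Psi_A}$), then produces a diffeomorphism $\Phi:\bar{\Omega}\to\bar{\Gamma}\subset N'$ with $\Phi^{\ast}\Psi=\Psi_A$. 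The identity $\Psi\circ\Phi=\Psi_A$ forces $\Phi$ to carry each $\{\Psi_A=c\}\cap\bar{\Omega}$ diffeomorphically onto $\{\Psi=c\}\cap\bar{\Gamma}$, which is the claim. The delicate step is the joint smoothness of $\Psi$ in $(x,y,z,\epsilon)$ up to $\epsilon=0$, which rests on the parameter-dependent inverse function theorem and on the uniform Jacobian bound on $\bar{\Omega}$; once that is in place, the rest of the argument follows directly from Lemma~1.
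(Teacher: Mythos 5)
Your argument is correct in substance and rests on the same engine as the paper's proof (Lemma~1, i.e.\ Moser's trick), but you apply it to a different pair of functions, and the difference is not cosmetic. The paper takes $f=r^2$ and $f+\epsilon g=\rho^2$ with $g=-2\lr{x\delta x+y\delta y}+\epsilon\lr{\delta x^2+\delta y^2}$: since $r>\epsilon\lr{1+\sqrt{2}}>0$ on $N'$, the hypothesis $\nabla f\neq\bol{0}$ is automatic for \emph{any} bounded $\Omega\subset N'$, and because $g$ depends only on $\lr{x,y}$ the Moser flow preserves $z$, so the diffeomorphism carrying $\rho^2$ to $r^2$ simultaneously carries $\Psi=\Psi_0-\lr{\rho-r_0}^2-z^2$ to $\Psi_A$ (this last step is only sketched in the paper). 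Your choice $f=\Psi_A$, $f+\epsilon g=\Psi$ gives the conclusion $\Phi^{\ast}\Psi=\Psi_A$ immediately, but at the price of requiring $\min_{\bar{\Omega}}\abs{\nabla\Psi_A}>0$, i.e.\ that $\bar{\Omega}$ avoid the critical circle $\lrc{r=r_0,\,z=0}$; this is an extra hypothesis not present in the proposition as stated (which allows any bounded $\Omega\subset N'$), though it is harmless for the later application where $\lr{\Psi,\Theta,\zeta}$ must be a genuine coordinate system. On the other hand, you are more careful than the paper on two points it glosses over: the joint smoothness of $\Psi\lr{\bol{x};\epsilon}$ up to $\epsilon=0$ (needed to write $\Psi=\Psi_A+\epsilon G$ with $G$ uniformly $C^1$-bounded), and the fact that Lemma~1 is being invoked with a $g$ that itself depends on $\epsilon$ --- an issue the paper's own $g$ also has, via the $\epsilon\lr{\delta x^2+\delta y^2}$ term and the implicit dependence of $\zeta$ on $\epsilon$. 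Both routes are sound; the paper's is slightly more robust in its hypotheses, yours is more direct and more explicit about the analytic details.
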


\begin{proof}
To prove this statement it is sufficient to show that level sets of the squared perturbed polar radius $\rho^2$ are diffeomorphic to  cylinders in $\Omega$. 
Indeed, if this were true the perturbed polar radius $\rho$ would define a notion of radial distance, forcing $\Psi$ to be a measure of the distance from the perturbed toroidal axis located at  $\lr{\rho=r_0,z=0}$.  
We have 
\eq{
\rho^2=r^2-2\epsilon\lr{x\delta x+y\delta y}+\epsilon^2\lr{\delta x^2+\delta y^2}.
}
Hence, by setting $g=-2\lr{x\delta x+y\delta y}+\epsilon\lr{\delta x^2+\delta y^2}$, the hypothesis of lemma 1 are verified with $f=r^2$. It follows that $\rho^2$ and $r^2$ are diffeomorphic and the statement is proven. 
\end{proof}

\begin{remark} 
The fact that level sets of the function $\Psi$ given in eq. \eqref{Psi} are diffeomorphic to level sets of $\Psi_A$ can also be deduced from stability theory of smooth mappings \cite{GG}. 
If $f\in C^{\infty}\lr{\bar{\Omega}}$, with $\Omega\subset\mathbb{R}^n$ a smooth bounded domain, is a real valued function with no critical points, it is a stable  Morse function.
Take a real valued function $g\in C^{\infty}\lr{\bar{\Omega}}$. 
Then, for small enough $\epsilon>0$, the function $h=f+\epsilon g$ belongs to a neighborhood of $f$ in the Whitney $C^{\infty}\lr{\Omega}$ topology. Since $f$ is stable, $h$ and $f$ are equivalent, i.e. there exist diffeomorphisms $\Phi_\Omega:\Omega\rightarrow\Omega$ and $\Phi_{\mathbb{R}}:\mathbb{R}\rightarrow\mathbb{R}$ such that $\Phi_{\mathbb{R}}\lr{h\lr{\Phi_{\Omega}\lr{\bol{x}}}}=f\lr{\bol{x}}$ for all $\bol{x}\in\Omega$. 
In the present setting the desired result can be obtained  with $f=r^2$ and $h=\rho^2$. 
\end{remark}


\begin{proposition}
Level sets of the function $\Psi$ given in eq. \eqref{Psi} are not invariant under continuous Euclidean isometries. 
\end{proposition}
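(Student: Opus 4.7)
The plan is to reduce the statement to the geometry of the critical locus $\gamma=\lrc{\bol{x}:\Psi(\bol{x})=\Psi_0}$, on which $\Psi$ attains its global maximum. Any continuous one-parameter subgroup of Euclidean isometries of $\mathbb{R}^3$ is generated by a Killing field $\bol{\xi}_E=\bol{a}+\bol{b}\cp\bol{x}$ with $(\bol{a},\bol{b})\neq(\bol{0},\bol{0})$; if its flow preserves every level set of $\Psi$ setwise, then $\gamma$ is preserved setwise as well, since $\Psi_0$ is the unique maximum value. I would compute $\gamma$ explicitly using \eqref{sys16}, classify which one-parameter isometry groups can possibly preserve a bounded closed planar curve, and verify that $\gamma$ fails the geometric condition that remains.

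From $\Psi=\Psi_0-(\rho-r_0)^2-z^2$, the set $\gamma$ reduces to $\lrc{\rho=r_0,\,z=0}$, which by \eqref{sys16} admits the parametrisation
\[
x(\zeta)=\lr{r_0-\tfrac{3\epsilon}{4}}\cos\zeta+\tfrac{\epsilon}{12}\cos 3\zeta,\qquad y(\zeta)=\lr{r_0-\tfrac{\epsilon}{4}}\sin\zeta+\tfrac{\epsilon}{12}\sin 3\zeta,\qquad z(\zeta)=0,
\]
after applying $\cos^3\zeta=\tfrac14(3\cos\zeta+\cos 3\zeta)$ and $\sin^3\zeta=\tfrac14(3\sin\zeta-\sin 3\zeta)$. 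Next I would argue that any one-parameter isometry group preserving a bounded set must be a pure rotation, since translations and screw motions have unbounded orbits. For a rotation to preserve a curve contained in the plane $\lrc{z=0}$ the axis must be perpendicular to that plane (otherwise the plane itself is tilted off $\lrc{z=0}$); hence the axis is vertical, and the preserved connected one-dimensional set $\gamma$ must be a single circle centred on that axis.

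It therefore remains to show that $\gamma$ is not a circle. The parametrisation possesses the discrete symmetries $\zeta\mapsto-\zeta$ and $\zeta\mapsto\pi-\zeta$, which act on $\gamma$ as the reflections $(x,y)\mapsto(x,-y)$ and $(x,y)\mapsto(-x,y)$ respectively, so any centre of a circle containing $\gamma$ must lie on both coordinate axes and hence equal the origin. The two explicit points $\gamma(0)=\lr{r_0-\tfrac{2\epsilon}{3},\,0,\,0}$ and $\gamma(\pi/2)=\lr{0,\,r_0-\tfrac{\epsilon}{3},\,0}$ have squared distances $(r_0-\tfrac{2\epsilon}{3})^2$ and $(r_0-\tfrac{\epsilon}{3})^2$ from the origin, which differ whenever $\epsilon>0$ and $r_0>2\epsilon/3$, giving the contradiction. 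The only step needing care beyond direct calculation is the degenerate case in which $\bol{\xi}_E$ vanishes identically on $\gamma$: I will dispose of it using the standard rigidity that a Killing field of $\mathbb{R}^3$ vanishing at three non-collinear points is identically zero, applied to any three non-collinear points of the smooth curve $\gamma$. I expect this brief rigidity argument to be the only subtle ingredient of the proof; the remainder is straightforward Euclidean geometry and explicit evaluation.
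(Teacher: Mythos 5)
Your proposal is correct, and it takes a genuinely different route from the paper. The paper computes the Lie derivative $\mf{L}_{\bol{\xi}_E}\Psi$ explicitly and eliminates the six parameters $(\bol{a},\bol{b})$ one at a time by evaluating it on the curves $\lrc{\zeta=\pi/2,\Theta=0}$ and $\lrc{\zeta=0,\Theta=0}$, on the surface $\Theta=0$, and finally on the circle $\Theta=\pi/2$. You instead localize the entire argument to the single degenerate level set $\gamma=\lrc{\Psi=\Psi_0}$ (the perturbed toroidal axis), and replace the computation by elementary Euclidean geometry: boundedness of $\gamma$ rules out translations and screw motions, planarity plus connectedness forces a vertical rotation axis and forces $\gamma$ to be a round circle, and your two evaluated points $\gamma(0)$ and $\gamma(\pi/2)$ (whose coordinates and the Chebyshev reductions I have checked) lie at distances $r_0-\tfrac{2\epsilon}{3}\neq r_0-\tfrac{\epsilon}{3}$ from the forced center, a contradiction; the Killing-field rigidity step correctly disposes of the degenerate fixed-point case. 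What each approach buys: the paper's computation is self-contained, needs no classification of one-parameter isometry groups, and uses only regular level sets, so it is insensitive to whether the core curve belongs to the domain under consideration; your argument is shorter, more conceptual, and immediately portable to any of the other perturbations in Section 4 whose axis is visibly non-circular, but it does hinge on the degenerate level set lying in the region where $\Psi$ is defined (true here, since $\gamma\subset N'$ for small $\epsilon$) and would not directly apply to a perturbation that fixes the axis while deforming only the outer surfaces. Two small points to tighten in writing it up: the justification ``otherwise the plane itself is tilted off $\lrc{z=0}$'' should be replaced by the observation that a nondegenerate orbit circle contained in $\lrc{z=0}$ must have its plane equal to $\lrc{z=0}$, hence axis vertical (a half-turn about a horizontal axis does preserve the plane, so the statement about the plane alone is too weak); and you should note explicitly that $\gamma$, being compact and invariant, carries a complete flow, so its orbits really are full circles rather than arcs.
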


\begin{proof}
It is suffcient to show that
\eq{
\mf{L}_{\bol{\xi}_E}\Psi\neq 0,~~~~\forall \bol{\xi}_E\neq\bol{0},
}
where $\bol{\xi}_E=\bol{a}+\bol{b}\cp\bol{x}$, $\bol{a},\bol{b}\in\mathbb{R}^3$, is the generator of continuous Euclidean isometries in three-dimensional Euclidean space. Note that $\rho=\rho\lr{x,y}$ and $\zeta=\zeta\lr{x,y}$ since from \eqref{sys16} we have 
\eq{
\rho^2=\lr{x-\epsilon\delta x\lr{\zeta}}^2+\lr{y-\epsilon\delta y\lr{\zeta}}^2,~~~~\zeta=\arctan\lr{\frac{y-\epsilon\delta y\lr{\zeta}}{x-\epsilon\delta x\lr{\zeta}}}
.}
It follows that
\eq{
\mf{L}_{\bol{\xi}_E}\Psi=&-2\lr{a_x+b_yz-b_zy}\lrs{x-\epsilon\delta x-\epsilon\lr{x-\epsilon\delta x}\frac{\p\delta x}{\p x}-\epsilon\lr{y-\epsilon\delta y}\frac{\p\delta y}{\p x}}\lr{1-\frac{r_0}{\rho}}\\&
-2\lr{a_y+b_zx-b_xz}\lrs{y-\epsilon\delta y-\epsilon\lr{x-\epsilon\delta x}\frac{\p\delta x}{\p y}-\epsilon\lr{y-\epsilon\delta y}\frac{\p\delta y}{\p y}}\lr{1-\frac{r_0}{\rho}}\\&
-2\lr{a_z+b_xy-b_yx}z.\label{Lie1}
}
Now observe that from \eqref{delta} we have
\eq{
\lr{x-\epsilon\delta x}\nabla\delta x+\lr{y-\epsilon\delta y}\nabla\delta y=\lr{x-\epsilon\delta x}\lr{\delta x_{\zeta}+\tan\zeta\delta y_{\zeta}}\nabla\zeta=\bol{0}.
}
Equation \eqref{Lie1} therefore reduces to
\eq{
\mf{L}_{\bol{\xi}_E}\Psi=&-2\lr{1-\frac{r_0}{\rho}}\lrs{\lr{a_x+b_yz-b_zy}\lr{x-\epsilon\delta x}
+\lr{a_y+b_zx-b_xz}\lr{y-\epsilon\delta y}}\\&
-2\lr{a_z+b_xy-b_yx}z.
}
Consider the curve  $\zeta=\pi/2$, $\Theta=0$. We have $x=\delta x=z=0$, $y-\epsilon\delta y=r_0+\sqrt{\Psi_0-\Psi}$. 
For the Lie-derivative \eqref{Lie1} to vanish on this curve we must therefore satisfy the condition
\eq{
-2{a_y}\lr{y-\epsilon\delta y}\lr{1-\frac{r_0}{\rho}}=0,
}
which implies $a_y=0$. 
Now consider the curve $\zeta=0$, $\Theta=0$. We have $y=\delta y=z=0$, $x-\epsilon\delta x=r_0+\sqrt{\Psi_0-\Psi}$. 
For the Lie-derivative \eqref{Lie1} to vanish on this curve we must therefore satisfy the condition
\eq{
-2{a_x}\lr{x-\epsilon\delta x}\lr{1-\frac{r_0}{\rho}}=0,
}
which implies $a_x=0$. 
Next, consider the surface $\Theta=0$. We have $z=0$ and the condition
\eq{
&
2\epsilon b_z\lr{1-\frac{r_0}{\rho}}\lr{ x\delta y-y\delta x}=\frac{4}{3}\epsilon b_z\lr{\rho-r_0}\sin\zeta\cos\zeta=0,
}
which implies $b_z=0$. 
The remaining terms in the Lie-derivative \eqref{Lie1} thus vanish if and only if 
\eq{
&
b_x\lrc{\lr{y-\epsilon\delta y}\lr{1-\frac{r_0}{\rho}}-y}-b_y\lrc{\lr{x-\epsilon\delta x}\lr{1-\frac{r_0}{\rho}}-x}=a_z.
}
For $b_x,b_y\neq 0$, the quantity on the left-hand side is a function. This can be seen, for example, as follows: for $r_0>\epsilon$, at $\Theta=\pi/2$ we have $\rho=r_0>\epsilon$, implying
\eq{
b_y\lr{r_0\cos\zeta+\epsilon\delta x}-b_x\lr{r_0\sin\zeta+\epsilon\delta y}=a_z,
}
which can be satisfied only if $b_x=b_y=a_z=0$. 
We conclude that $\bol{a}=\bol{b}=\bol{0}$.
\end{proof}

\begin{proposition} 
Let $\Omega\subset N'$ denote a toroidal volume enclosed by a level set $\p\Omega$ of the function $\Psi$ as given by equation \eqref{Psi}. Assume $\bar{\Omega}\subset N'$.  
Then, the vector field 
$\bol{u}=\nabla\Phi+\Psi\nabla\Theta+\alpha\nabla\zeta=-\p_{\Theta}$, with $\Theta$ and $\zeta$ given  by equation \eqref{sys16}, $\Phi$ by equation \eqref{Phi}, $\alpha$ by equation \eqref{alpha}, and $\chi_{\Psi}$ by equation \eqref{chipsi},   
is smooth in $\Omega$. Furthermore, it is a solution of system \eqref{sys1}. 
\end{proposition}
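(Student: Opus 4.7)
The plan is a clean application of Propositions 1 and 2. First, I would verify the hypotheses of Proposition 1: one needs $\lr{\Psi,\Theta,\zeta}$ to form a coordinate system on $\Omega$ satisfying system \eqref{sys13}, and $\p\Omega$ to be a level set of $\Psi$. The second condition is exactly the hypothesis of the present proposition, and the equations \eqref{sys13} have already been verified by direct substitution of the derivatives listed in \eqref{sys16a}. The fact that $\lr{\Psi,\Theta,\zeta}$ is a genuine smooth coordinate system on $\Omega$ follows from Proposition 2: since $\bar{\Omega}\subset N'$, the map $T$ of \eqref{sys16} restricts to a smooth diffeomorphism with smooth inverse in a neighborhood of every point of $\bar{\Omega}$. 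Applying Proposition 1 then yields the identity $\bol{u}=\nabla\Phi+\Psi\nabla\Theta+\alpha\nabla\zeta=-\p_{\Theta}$ together with the fact that $\lr{\bol{u},P}$ with $P=\Psi$ solves \eqref{sys1}; the boundary conditions $\bol{u}\cdot\bol{n}=0$ and $P=\Psi_b=:P_b$ on $\p\Omega$ hold because $\p_{\Theta}$ is tangent to level sets of $\Psi$ and $\p\Omega$ is one such level set.

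Next, I would establish smoothness. The smooth invertibility provided by Proposition 2 gives smooth coordinate functions $\Psi,\Theta,\zeta$ on $\Omega$, hence a smooth coordinate tangent vector $\p_{\Theta}$ on $\Omega$. Combined with the identity $\bol{u}=-\p_{\Theta}$ supplied by Proposition 1, this directly yields $\bol{u}\in C^{\infty}\lr{\Omega}$, and the pressure field $P=\Psi$ is smooth on $\Omega$ for the same reason.

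The principal obstacle is the multi-valued nature of the angular coordinates $\Theta$ and $\zeta$ on the toroidal region $\Omega$: the Clebsch potentials $\Phi$, $\alpha$, and $\chi_{\Psi}$ defined by \eqref{Phi}, \eqref{alpha}, and \eqref{chipsi} need not be single-valued smooth functions on $\Omega$, since they depend explicitly on these angles through integrals and algebraic combinations. In principle this could spoil the smoothness, or even the well-definedness, of the Clebsch sum $\nabla\Phi+\Psi\nabla\Theta+\alpha\nabla\zeta$. This difficulty is circumvented by the representation $\bol{u}=-\p_{\Theta}$ from Proposition 1: although the individual terms in the Clebsch expression may not extend to globally smooth single-valued functions, their combination coincides with the globally smooth coordinate vector field $-\p_{\Theta}$, whose smoothness on $\Omega$ is inherited directly from the smoothness of $T^{-1}$ on the neighborhood $N'\supset\bar{\Omega}$ established in Proposition 2. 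Hence the smoothness and PDE properties of $\bol{u}$ follow without requiring single-valuedness of the auxiliary potentials themselves.
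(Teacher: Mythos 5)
Your proposal is correct and follows essentially the same route as the paper's own (very terse) proof: the PDE property is obtained from Proposition 1, and smoothness of $\bol{u}=-\p_{\Theta}$ is reduced to the regularity of the tangent vector $\p\bol{x}/\p\Theta$ guaranteed by the coordinate transformation of Proposition 2. Your additional remarks on the boundary condition and on why the multi-valuedness of $\Theta$, $\zeta$, $\Phi$, $\alpha$ does not spoil the single-valuedness of $\bol{u}$ are sound elaborations of points the paper leaves implicit.
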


\begin{proof}
Since $\bol{u}=-\p_{\Theta}$, the smoothness of $\bol{u}$ in $\Omega$ 
follows from the regularity of the tangent vector $\p_{\Theta}=\p\bol{x}/\p\Theta$. 
The fact that $\bol{u}$ solves system \eqref{sys1} follows from proposition 1.
\end{proof}





\begin{remark} The solution obtained  in proposition 5 gives a contructive proof of theorem 1.  
\end{remark}

\section{Characterization and visualization of solutions}
As a consistency check we first want to show that in the limit $\epsilon\rightarrow 0$ the solution $\bol{u}=-\p_{\Theta}$ constructed in section 3  from the coordinate transformation eq. \eqref{sys16} reduces to the tangent vector in the poloidal direction $\Theta_A=\arctan\lrs{z/\lr{r-r_0}}$, and that in the same limit $\Psi$ tends to the axially symmetry torus, i.e. $\lim_{\epsilon\rightarrow 0}\Psi=\Psi_A=\Psi_0-\lr{r-r_0}^2-z^2$. From  \eqref{xtheta} we find
\eq{
\bol{u}=-\p_{\Theta}=
\frac{z}{\rho}\lr{x-\epsilon\delta x}\nabla x+
\frac{z}{\rho}
\lr{y-\epsilon\delta y}\nabla y
-\lr{\rho-r_0}\nabla z.\label{usec5}
}
On the other hand, $\lim_{\epsilon\rightarrow 0}\lr{\rho,\epsilon\delta x,\epsilon\delta y}=\lr{r,0,0}$, so that
\eq{
\bol{u}_A=\lim_{\epsilon \rightarrow0}\bol{u}=z\nabla r-\lr{r-r_0}\nabla z=-\frac{\nabla\Theta_A}{\abs{\nabla\Theta_A}^2}, 
}
where $\Theta_A=\lim_{\epsilon\rightarrow 0}\Theta$. 
It follows that
\eq{
\lr{\nabla\cp\bol{u}_A}\cp\bol{u}_A=\nabla\Psi_A.
}
Note also that $\bol{u}\neq \bol{u}_A$ for $\epsilon>0$ since $\lr{\nabla\cp\bol{u}}\cp\bol{u}=\nabla\Psi\neq\nabla\Psi_A$. 

It is not difficult to obtain other solutions of system \eqref{sys13} by choosing different $f\lr{\zeta}$ in \eqref{delta} and by 
using the corresponding coordinate change 
\eqref{sys15b} to determine  $\lr{\Psi,\Theta,\zeta}$.
For example, setting $f\lr{\zeta}=\sin^2\lr{3\zeta}$ gives
\eq{
&\delta x=-\frac{1}{2}\cos\zeta-\frac{1}{20}\cos\lr{5\zeta}+\frac{1}{28}\cos\lr{7\zeta},\\&\delta y=-\frac{1}{2}\sin\zeta+\frac{1}{20}\sin\lr{5\zeta}+\frac{1}{28}\sin\lr{7\zeta},\label{f2}
}
while $f\lr{\zeta}=\sin^4\lr{2\zeta}+\cos^2\lr{3\zeta}$ leads to
\eq{
&\delta x=-\frac{7}{8}\cos\zeta-\frac{1}{12}\cos\lr{3\zeta}+\frac{1}{10}\cos\lr{5\zeta}-\frac{3}{112}\cos\lr{7\zeta}-\frac{1}{144}\cos\lr{9\zeta},\\
&\delta y=-\frac{7}{8}\sin\zeta+\frac{1}{12}\sin\lr{3\zeta}-\frac{5}{112}\sin\lr{7\zeta}-\frac{1}{144}\sin\lr{9\zeta}.\label{f3}
}
It is also possible to identify more general classes of perturbations that leave the coordinate change $\bol{x}\lr{\Psi,\Theta,\zeta}$ consistent with \eqref{sys13}. For example, take  $\epsilon_1,\epsilon_2,\epsilon_3$ to be positive real constants and consider the coordinate change
\sys{
&x=\lr{r_0+\sqrt{\Psi_0-\Psi}\cos\Theta+\epsilon_2\sin\Theta-\epsilon_3 g\lr{\Theta}}\cos\zeta+\epsilon_1\delta x\lr{\zeta},\\
&y=\lr{r_0+\sqrt{\Psi_0-\Psi}\cos\Theta+\epsilon_2\sin\Theta-\epsilon_3g\lr{\Theta}}\sin\zeta+\epsilon_1\delta y\lr{\zeta},\\
&z=\sqrt{\Psi_0-\Psi}\sin\Theta+\epsilon_2\cos\Theta+\epsilon_3\delta z\lr{\Theta},
}{cc1}
where the functions $\delta z\lr{\Theta}$ and $g\lr{\Theta}$ are related by the following differential equation in $\Theta$, 
\eq{
\sin\Theta\lr{\frac{dg}{d\Theta}-\frac{d^2\delta z}{d\Theta^2}}+\cos\Theta\lr{\frac{d\delta z}{d\Theta}+\frac{d^2g}{d\Theta^2}}=0,\label{cc12}
}
and the perturbations $\delta x$ and $\delta y$ are given by \eqref{delta}. 
One can verify that \eqref{cc1} is consistent with system \eqref{sys13}. 
An example of solution of eq. \eqref{cc12} is
\eq{
\delta z=-\sin\lr{4\Theta}-10\sin\lr{2\Theta},~~~~g=\cos\lr{4\Theta}-10\cos\lr{2\Theta}.
}

Figure 1 shows plots of the level sets $\Psi=\Psi_c\in\mathbb{R}$ 
of the function $\Psi\lr{\bol{x}}$ 
obtained through the parametric surfaces $\bol{x}\lr{\Psi_c,\Theta,\zeta}$ that result from fixing $\Psi$ and  varying the parameters $\Theta,\zeta\in[0,2\pi)$ in the coordinate change \eqref{cc1}. Observe that the perturbations scaled by the constants $\epsilon_1,\epsilon_2,\epsilon_3$ break the symmetry of the axially symmetric (unperturbed) toroidal surface shown in figure 1(a).     
\begin{figure}[h]
\hspace*{-0cm}\centering
\includegraphics[scale=0.5]{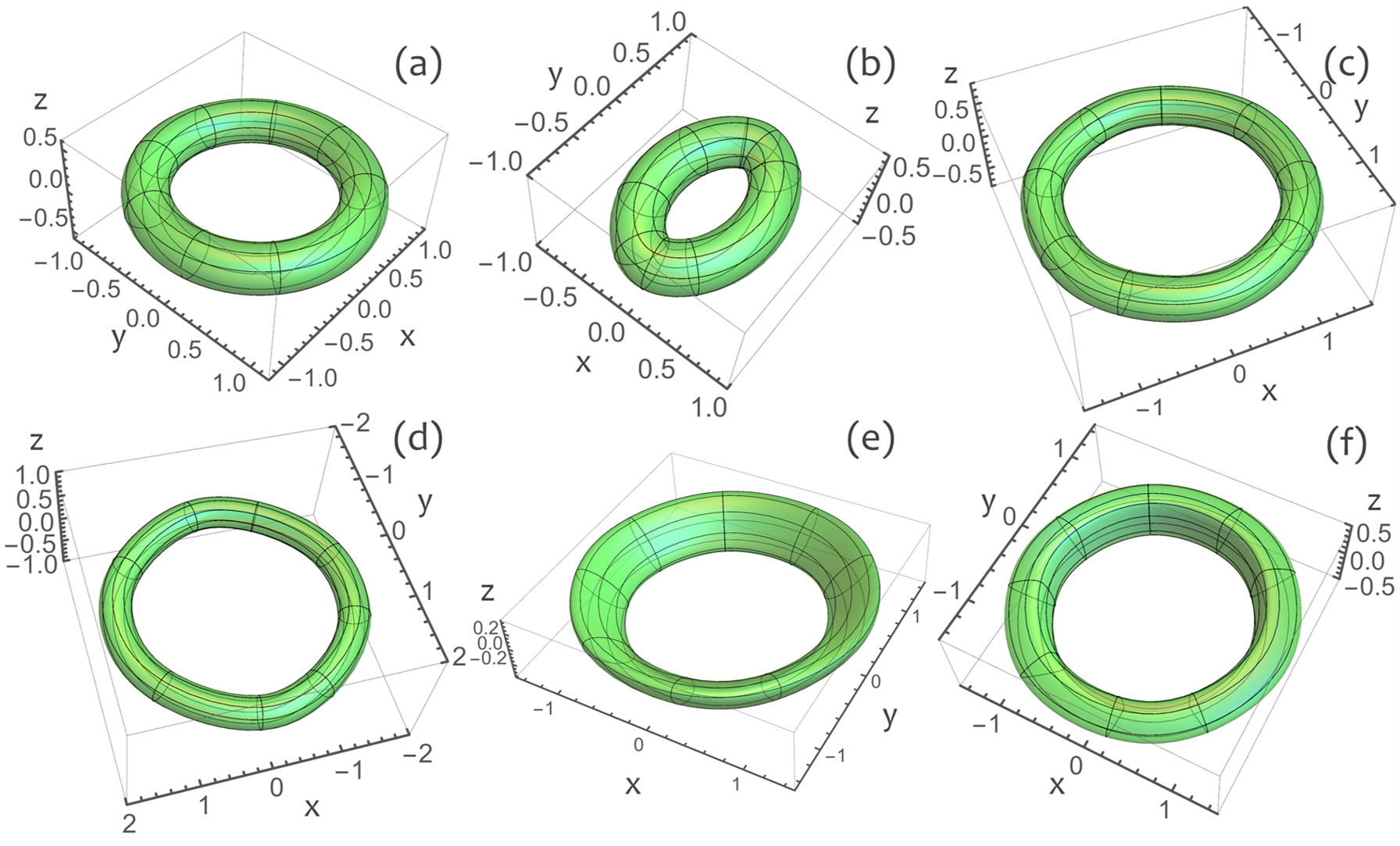}
\caption{\footnotesize 
Parametric plots of the level set $\Psi=0.95$ corresponding to \eqref{cc1} for 
(a) $\delta x=0$, $\delta y=0$, $\epsilon_1=0$, $\epsilon_2=0$, $\epsilon_3=0$, (b) 
$\delta x=-\cos\zeta\lr{1-\frac{1}{3}\cos^2\zeta}$, $\delta y=-\frac{1}{3}\sin^3\zeta$, $\epsilon_1=0.8$, $\epsilon_2=0$, $\epsilon_3=0$, 
(c) 
$\delta x=-\frac{1}{2}\cos\zeta-\frac{1}{20}\cos\lr{5\zeta}+\frac{1}{28}\cos\lr{7\zeta}$, $\delta y=-\frac{1}{2}\sin\zeta+\frac{1}{20}\sin\lr{5\zeta}+\frac{1}{28}\sin\lr{7\zeta}$, $\epsilon_1=0.7$, $\epsilon_2=0$, $\epsilon_3=0$, 
(d) 
$\delta x=-\frac{7}{8}\cos\zeta-\frac{1}{12}\cos\lr{3\zeta}+\frac{1}{10}\cos\lr{5\zeta}-\frac{3}{112}\cos\lr{7\zeta}-\frac{1}{144}\cos\lr{9\zeta}$, $\delta y=-\frac{7}{8}\sin\zeta+\frac{1}{12}\sin\lr{3\zeta}-\frac{5}{112}\sin\lr{7\zeta}-\frac{1}{144}\sin\lr{9\zeta}$, $\epsilon_1=0.7$, $\epsilon_2=0$, $\epsilon_3=0$,
(e) 
$\delta x=-\frac{1}{2}\cos\zeta-\frac{1}{20}\cos\lr{5\zeta}+\frac{1}{28}\cos\lr{7\zeta}$, $\delta y=-\frac{1}{2}\sin \zeta+\frac{1}{20}\sin\lr{5\zeta}+\frac{1}{28}\sin\lr{7\zeta}$, $\epsilon_1=0.5$, $\epsilon_2=0.1$, $\epsilon_3=0$, 
(f) 
$\delta x=-\frac{1}{2}\cos\zeta-\frac{1}{20}\cos\lr{5\zeta}+\frac{1}{28}\cos\lr{7\zeta}$, $\delta y=-\frac{1}{2}\sin \zeta+\frac{1}{20}\sin\lr{5\zeta}+\frac{1}{28}\sin\lr{7\zeta}$, $\delta z=-\sin\lr{4\Theta}-10\sin\lr{2\Theta}$, $g=\cos\lr{4\Theta}-10\cos\lr{2\Theta}$, $\epsilon_1=0.5$, $\epsilon_2=0.05$, $\epsilon_3=0.005$. In these plots $r_0=1$ and $\Psi_0=1$.}
\label{fig1}
\end{figure}

We now return to the solution $\bol{u}=-\p_{\Theta}$ constructed in section 3 from the coordinate transformation eq. \eqref{sys16}. To visualize this vector field, we consider a small perturbation $\epsilon <<1$. 
Recalling \eqref{dxdy}, we have $\delta x=O\lr{1}$ as well as $\delta y=O\lr{1}$. Furthermore, 
\eq{
\zeta=&\arctan\lr{\frac{y-\epsilon\delta y}{x-\epsilon \delta x}}=\arctan\lr{\frac{y}{x}}+\frac{\epsilon}{r^2}\lr{y\delta x-x\delta y}+O\lr{\epsilon^2}\\=&\varphi+\frac{\epsilon}{r^2}\lrs{\frac{1}{3}x\sin^3\varphi-y\cos\varphi\lr{1-\frac{1}{3}\cos^2\varphi}}+O\lr{\epsilon^2}\\=&\varphi-\frac{2\epsilon}{3r}\sin\varphi\cos\varphi+O\lr{\epsilon^2},
}
and
\eq{
\rho=r\lrs{1-\frac{\epsilon}{r^2}\lr{x\delta x+y\delta y}}+O\lr{\epsilon^2}=
r+\frac{\epsilon}{3}\lr{1+\cos^2\varphi}+O\lr{\epsilon^2}.
}
Now define
\eq{
&\epsilon\delta r=\rho-r=\epsilon\delta r_0+O\lr{\epsilon^2},~~~~\delta r_0=
\frac{\epsilon}{3}\lr{1+\cos^2\varphi},\\&\delta x_0=-\cos\varphi\lr{1-\frac{1}{3}\cos^2\varphi},~~~~\delta y_0=-\frac{1}{3}\sin^3\varphi. 
}
Using these expressions, we see that the vector field \eqref{usec5} can be expanded at first order in $\epsilon$ as
\eq{
\bol{u}=&\frac{z}{r}\lr{1+\epsilon\frac{\delta r}{r}}\lr{x-\epsilon\delta x}\nabla x
+\frac{z}{r}\lr{1+\epsilon\frac{\delta r}{r}}\lr{y-\epsilon\delta y}\nabla y-\lr{r-r_0-\epsilon\delta r}\nabla z+O\lr{\epsilon^2}\\
    =&\frac{z}{r}\lrs{x+\epsilon\lr{x\frac{\delta r_0}{r}-\delta x_0}}\nabla x+\frac{z}{r}\lrs{y+\epsilon\lr{y\frac{\delta r_0}{r}-\delta y_0}}\nabla y-\lr{r-r_0-\epsilon\delta r_0}\nabla z+O\lr{\epsilon^2},\label{uTay}
}
and that the function $\Psi$ of eq. \eqref{Psi} can be approximated as
\eq{
\Psi=\Psi_0-\lr{r-r_0}^2-z^2-2\epsilon\lr{r-r_0}\delta r_0+O\lr{\epsilon^2}.\label{PsiTay}
}

Figure 2 shows plots of the first-order expansion $\bol{u}_1$ of the vector field $\bol{u}=\bol{u}_1+O\lr{\epsilon^2}$ given in eq. \eqref{uTay} on the level sets of the first order expansion $\Psi_1$ of the function $\Psi=\Psi_1+O\lr{\epsilon^2}$ given in eq. \eqref{PsiTay} for different values of the ordering parameter $\epsilon$. 
Observe that the tangential condition $\bol{u}\cdot\nabla\Psi=0$ 
for the solution $\bol{u}$ 
is progressively violated for larger values of $\epsilon$, since the accuracy of the Taylor's expansions of both $\bol{u}$ and $\Psi$ deteriorates (see figure 1 for the non-approximated level sets of $\Psi$).  
From these figures, one sees that the solution $\bol{u}=-\p_{\Theta}$ exhibits a `poloidal' behavior, i.e. the flow tends to loop around the toroidal axis 
within level sets of the toroidal angle $\zeta$.

\begin{figure}[h]
\hspace*{-0cm}\centering
\includegraphics[scale=0.6]{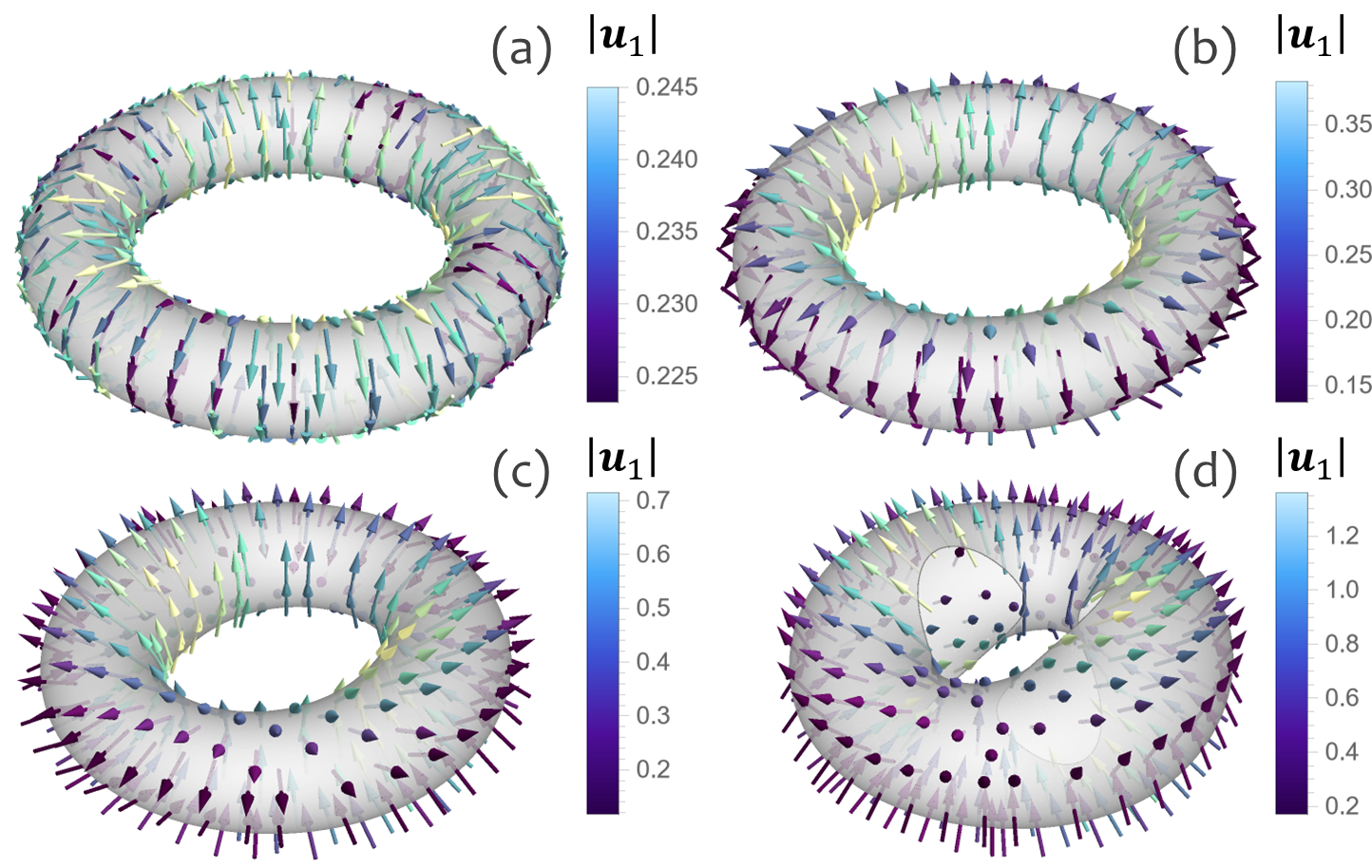}
\caption{\footnotesize 
Plots of the first-order expansion $\bol{u}_1$ of the vector field $\bol{u}=\bol{u}_1+O\lr{\epsilon^2}$ given in eq. \eqref{uTay} on the level sets of the first order expansion $\Psi_1$ of the function $\Psi=\Psi_1+O\lr{\epsilon^2}$ given in eq. \eqref{PsiTay} for $\delta x=-\cos\zeta\lr{1-\frac{1}{3}\cos^2\zeta}$ and $\delta y=-\frac{1}{3}\sin^3\zeta$. Recall that $\bol{u}$ solves \eqref{sys1} with $P=\Psi$.  
(a) $\epsilon=0$. (b) $\epsilon=0.1$. (c). $\epsilon=0.3$ (d) $\epsilon=0.6$.}
\label{fig2}
\end{figure}



\section{Remarks on Incompressibile Solutions}

The aim of this last section is to discuss some consequences of the present theory for the case in which solenoidal solutions ($\nabla\cdot\bol{u}=0$) of system \eqref{sys1} are sought in the `poloidal' form $\bol{u}=-\p_{\Theta}$. 
First, recall that 
$\p_{\Theta}\cdot\nabla\Psi=\p_{\Theta}\cdot\nabla\zeta=0$ and $\p_{\Theta}\cdot\nabla\Theta=1$. Hence, 
\eq{
\nabla\cdot\bol{u}=-\nabla\cdot\lr{\p_{\Theta}}=-J\frac{\p}{\p\Theta}\lr{\frac{1}{J}}=\frac{1}{J}\frac{\p J}{\p\Theta}=0.\label{divu0}
}
Noting that $J^{-1}=\p_{\Psi}\cdot\p_{\Theta}\cp\p_{\zeta}$, equation \eqref{divu0} is thus equivalent to
\eq{
\frac{\p^2\bol{x}}{\p\Theta\p\Psi}\cdot\frac{\p\bol{x}}{\p\Theta}\cp\frac{\p\bol{x}}{\p\zeta}+
\frac{\p\bol{x}}{\p\Psi}\cdot\frac{\p^2\bol{x}}{\p\Theta^2}\cp\frac{\p\bol{x}}{\p\zeta}+
\frac{\p\bol{x}}{\p\Psi}\cdot\frac{\p\bol{x}}{\p\Theta}\cp\frac{\p^2\bol{x}}{\p\Theta\p\zeta}=0.\label{divu02}
}
Equation \eqref{divu02} can be written as
\eq{
\frac{\p^2\bol{x}}{\p\Theta\p\Psi}&\cdot\nabla\Psi+\frac{\p^2\bol{x}}{\p\Theta^2}\cdot\nabla\Theta+\frac{\p^2\bol{x}}{\p\Theta\p\zeta}\cdot\nabla\zeta=
g^{\Psi\Psi}\frac{\p^2\bol{x}}{\p\Theta\p\Psi}\cdot\frac{\p\bol{x}}{\p\Psi}+g^{\Psi\Theta}\lr{\frac{\p^2\bol{x}}{\p\Theta\p\Psi}\cdot\frac{\p\bol{x}}{\p\Theta}+\frac{\p^2\bol{x}}{\p\Theta^2}\cdot\frac{\p\bol{x}}{\p\Psi}}\\&
+g^{\Psi\zeta}\lr{\frac{\p^2\bol{x}}{\p\Theta\p\Psi}\cdot\frac{\p\bol{x}}{\p\zeta}+\frac{\p^2\bol{x}}{\p\Theta\p\zeta}\cdot\frac{\p\bol{x}}{\p\Psi}}
+g^{\Theta\Theta}{\frac{\p^2\bol{x}}{\p\Theta^2}\cdot\frac{\p\bol{x}}{\p\Theta}}
\\&+g^{\Theta\zeta}\lr{\frac{\p^2\bol{x}}{\p\Theta^2}\cdot\frac{\p\bol{x}}{\p\zeta}+\frac{\p^2\bol{x}}{\p\Theta\p\zeta}\cdot\frac{\p\bol{x}}{\p\Theta}}+g^{\zeta\zeta}\frac{\p^2\bol{x}}{\p\Theta\p\zeta}\cdot\frac{\p\bol{x}}{\p\zeta}\\=&\frac{1}{2}g^{\Psi\Psi}\frac{\p g_{\Psi\Psi}}{\p\Theta}+g^{\Psi\Theta}\frac{\p g_{\Psi\Theta}}{\p\Theta}+g^{\Psi\zeta}\frac{\p g_{\Psi\zeta}}{\p\Theta}+\frac{1}{2}g^{\Theta\Theta}\frac{\p g_{\Theta\Theta}}{\p\Theta}+g^{\Theta\zeta}\frac{\p g_{\Theta\zeta}}{\p\Theta}+\frac{1}{2}g^{\zeta\zeta}\frac{\p g_{\zeta\zeta}}{\p\Theta}
\\=&g^{\Psi\Psi}\Gamma_{\Psi\Psi\Theta}+g^{\Psi\Theta}\lr{\Gamma_{\Psi\Theta\Theta}+\Gamma_{\Theta\Psi\Theta}}+g^{\Psi\zeta}\lr{\Gamma_{\Psi\zeta\Theta}+\Gamma_{\zeta\Theta\Psi}}+g^{\Theta\Theta}\Gamma_{\Theta\Theta\Theta}+g^{\Theta\zeta}\lr{\Gamma_{\zeta\Theta\Theta}+\Gamma_{\Theta\zeta\Theta}}+g^{\zeta\zeta}\Gamma_{\zeta\zeta\Theta}\\=&\Gamma^{\Psi}_{\Psi\Theta}+\Gamma^{\Theta}_{\Theta\Theta}+\Gamma^{\zeta}_{\zeta\Theta}=0,\label{Chris2}
}
where $\Gamma^{i}_{jk}=\frac{1}{2}g^{im}\lr{\frac{\p g_{mj}}{\p x^k}+\frac{\p g_{mk}}{\p x^j}-\frac{\p g_{jk}}{\p x^m}}$ are the Christoffel symbols of the second kind. 

These results can be used to state the following: 

\begin{proposition}
Let $\lr{\Psi,\Theta,\zeta}$ be a smooth coordinate system in a smooth bounded domain $\Omega\subset\mathbb{R}^3$ satisfying
\sys{
&\frac{\p\bol{x}}{\p\Psi}\cdot\frac{\p^2\bol{x}}{\p\Theta^2}=1+\frac{\p\bol{x}}{\p\Theta}\cdot\frac{\p^2\bol{x}}{\p\Psi\p\Theta},\\
&\frac{\p\bol{x}}{\p\Theta}\cdot\frac{\p^2\bol{x}}{\p\Theta\p\zeta}=\frac{\p\bol{x}}{\p\zeta}\cdot\frac{\p^2\bol{x}}{\p\Theta^2},\\
&\frac{\p^2\bol{x}}{\p\Theta\p\Psi}\cdot\frac{\p\bol{x}}{\p\Theta}\cp\frac{\p\bol{x}}{\p\zeta}+
\frac{\p\bol{x}}{\p\Psi}\cdot\frac{\p^2\bol{x}}{\p\Theta^2}\cp\frac{\p\bol{x}}{\p\zeta}+
\frac{\p\bol{x}}{\p\Psi}\cdot\frac{\p\bol{x}}{\p\Theta}\cp\frac{\p^2\bol{x}}{\p\Theta\p\zeta}=0.
}{sysdiv02}
Suppose that the boundary $\p\Omega$ corresponds to a contour of the function $\Psi$. 
Then, the vector field $\bol{u}=-\p_{\Theta}$ 
solves \eqref{sys1} with $P=\Psi$ and $\nabla\cdot\bol{u}=0$. 
\end{proposition}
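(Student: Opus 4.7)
The proposition splits cleanly into two independent assertions: (i) the vector field $\bol{u}=-\p_\Theta$ satisfies the force balance and boundary conditions of system \eqref{sys1} with $P=\Psi$, and (ii) this vector field is solenoidal. My plan is to handle (i) by citing Proposition 1 directly, and (ii) by invoking the computation already carried out in equations \eqref{divu0}--\eqref{divu02}.

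For part (i), I would observe that the first two equations in \eqref{sysdiv02} are literally the two equations of \eqref{sys13}, which form the hypotheses of Proposition 1. Since the boundary $\p\Omega$ is assumed to be a level set of $\Psi$, all the hypotheses of Proposition 1 are met. Defining $\Phi$, $\alpha$, and $\chi_\Psi$ via equations \eqref{Phi}, \eqref{alpha}, and \eqref{chipsi} respectively, Proposition 1 then delivers that $\bol{u}=\nabla\Phi+\Psi\nabla\Theta+\alpha\nabla\zeta=-\p_\Theta$ solves \eqref{sys1} with $P=\Psi$. Note that no calculation is required at this stage beyond recognising the identity of the two systems.

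For part (ii), I would use the standard coordinate-basis identity $\nabla\cdot\p_\Theta=J\,\p(J^{-1})/\p\Theta$, where $J^{-1}=\p_\Psi\cdot\p_\Theta\cp\p_\zeta$ is the Jacobian determinant of the coordinate chart, exactly as displayed in \eqref{divu0}. Expanding $\p J^{-1}/\p\Theta$ by the Leibniz rule on the triple product yields precisely the left-hand side of the third equation in \eqref{sysdiv02}. Therefore the third hypothesis of the proposition is equivalent to $\p J^{-1}/\p\Theta=0$, which in turn is equivalent to $\nabla\cdot\bol{u}=0$. This is a direct two-line computation which amounts to reading equations \eqref{divu0}--\eqref{divu02} in reverse.

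I do not anticipate a substantive obstacle here, since the proposition is essentially a bookkeeping statement that reassembles previously established results: Proposition 1 supplies the nonlinear Euler-type balance, and the derivation preceding the proposition supplies the incompressibility. The only point requiring mild care is verifying that the coordinate system $(\Psi,\Theta,\zeta)$ is regular on $\bar\Omega$ (so that $J$ and $J^{-1}$ are smooth and non-vanishing), but this is built into the hypothesis that the coordinates are a smooth chart on $\Omega$.
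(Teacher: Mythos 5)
Your proposal is correct and follows essentially the same route as the paper: the paper's own proof simply cites Proposition 1 (whose hypotheses are the first two equations of \eqref{sysdiv02} plus the boundary condition) together with the equivalence between $\nabla\cdot\p_{\Theta}=0$ and the triple-product identity in the third equation, exactly as you lay out. Your added remark about regularity of the chart (so that $J$ is smooth and nonvanishing) is a reasonable point of care that the paper leaves implicit.
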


\begin{proof}
The proof of this statement follows from proposition 1 and the equivalence between \eqref{divu0} and \eqref{divu02}. 
\end{proof}
Alternatively, we have
\begin{proposition}
Let $\lr{\Psi,\Theta,\zeta}$ be a smooth coordinate system in a smooth bounded domain $\Omega\subset\mathbb{R}^3$ satisfying
\sys{
&\Gamma_{\Psi\Theta\Theta}=1+\Gamma_{\Theta\Psi\Theta},\\
&\Gamma_{\Theta\zeta\Theta}=\Gamma_{\zeta\Theta\Theta},\\
&\Gamma_{\Psi\Theta}^{\Psi}+\Gamma_{\Theta\Theta}^{\Theta}+\Gamma_{\zeta\Theta}^{\Theta}=0.
}{sysdiv0}
Suppose that the boundary $\p\Omega$ corresponds to a contour of the function $\Psi$. 
Then, the vector field $\bol{u}=-\p_{\Theta}$ 
solves \eqref{sys1} with $P=\Psi$ and $\nabla\cdot\bol{u}=0$. 
\end{proposition}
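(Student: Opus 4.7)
The strategy is to reduce Proposition 7 to Proposition 6 by observing that the three Christoffel-symbol conditions in \eqref{sysdiv0} are equivalent to the three map-derivative conditions in \eqref{sysdiv02}. Once this equivalence is in hand, Proposition 6 immediately supplies a solution $\bol{u}=-\p_{\Theta}$ of \eqref{sys1} with $P=\Psi$, and solenoidality is encoded by the third hypothesis.

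For the first two equations I would expand the Christoffel symbols of the first kind by using $g_{ij}=\p_i\bol{x}\cdot\p_j\bol{x}$ together with the definition $\Gamma_{ijk}=\tfrac{1}{2}\lr{\p_k g_{ij}+\p_j g_{ik}-\p_i g_{jk}}$. A short calculation (already invoked in the excerpt when passing from \eqref{sys13} to \eqref{Chris1}) gives
\eq{
\Gamma_{\Psi\Theta\Theta}=\frac{\p\bol{x}}{\p\Psi}\cdot\frac{\p^2\bol{x}}{\p\Theta^2},\qquad
\Gamma_{\Theta\Psi\Theta}=\frac{\p\bol{x}}{\p\Theta}\cdot\frac{\p^2\bol{x}}{\p\Psi\p\Theta},\\
\Gamma_{\Theta\zeta\Theta}=\frac{\p\bol{x}}{\p\Theta}\cdot\frac{\p^2\bol{x}}{\p\Theta\p\zeta},\qquad
\Gamma_{\zeta\Theta\Theta}=\frac{\p\bol{x}}{\p\zeta}\cdot\frac{\p^2\bol{x}}{\p\Theta^2},
}
so that the first two equations of \eqref{sysdiv0} translate verbatim into the first two equations of \eqref{sysdiv02}.

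For the third equation I would simply appeal to the calculation already carried out between \eqref{divu0} and \eqref{Chris2}, which exhibits the chain of equalities
\eq{
\nabla\cdot\bol{u}=-\nabla\cdot\lr{\p_{\Theta}}=\frac{1}{J}\frac{\p J}{\p\Theta}=\Gamma^{\Psi}_{\Psi\Theta}+\Gamma^{\Theta}_{\Theta\Theta}+\Gamma^{\zeta}_{\zeta\Theta},
}
together with the scalar-triple-product rewriting in the third line of \eqref{sysdiv02}. Hence the vanishing of this contracted trace is equivalent both to $\nabla\cdot\bol{u}=0$ and to the third condition of \eqref{sysdiv02}. Combining the two steps verifies all hypotheses of Proposition 6, which immediately yields the claim.

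No genuine obstacle arises in this proof; the work is purely definitional. The only point deserving care is adherence to the index conventions, since the natural contracted symbol that appears in the divergence formula is $\Gamma^{\zeta}_{\zeta\Theta}$ (this should be read as the intended index pattern in the last term of the third line of \eqref{sysdiv0}, consistent with \eqref{Chris2}).
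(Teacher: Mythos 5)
Your proof is correct and follows essentially the same route as the paper, which likewise establishes Proposition 7 by combining Proposition 1 with the equivalence between the map-derivative conditions and the Christoffel conditions (via $\Gamma_{ijk}=\p_i\bol{x}\cdot\p_j\p_k\bol{x}$) and the equivalence between $\nabla\cdot\bol{u}=0$ and the vanishing of the contracted trace in \eqref{Chris2}. Your reading of the last term of \eqref{sysdiv0} as $\Gamma^{\zeta}_{\zeta\Theta}$ is the intended one, and the only blemish is an immaterial sign in the chain $\frac{1}{J}\p_\Theta J=\Gamma^{\Psi}_{\Psi\Theta}+\Gamma^{\Theta}_{\Theta\Theta}+\Gamma^{\zeta}_{\zeta\Theta}$ (the two sides are actually opposite in sign), which does not affect the equivalence since both are set to zero.
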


\begin{proof}
The proof of this statement follows from proposition 1, the equivalence between \eqref{sys13} and eq. \eqref{Chris1}, and the equivalence between \eqref{divu0} and equation \eqref{Chris2}.
\end{proof}

In essence, propositions 6 and 7 reduce the problem of determining a steady incompressible Euler flow (or a magnetohydrodynamic  equilibrium) in a bounded domain to the existence of a coordinate system whose Christoffel symbols satisfy  \eqref{sysdiv0}, and with the additional property that the boundary is given as a level set of one of the coordinates.  
The advantage of this formulation is that, as shown in section 4, solutions of \eqref{sysdiv02} or \eqref{sysdiv0} can sought in the form $\bol{x}\lr{\Psi,\Theta,\zeta}$, which better accommodates the geometry of the problem compared to formulations such as \eqref{sys1} in terms of the vector field  $\bol{u}\lr{\bol{x}}$.
Furthermore, the geometric formulation of the steady Euler equations in terms of the metric tensor enables us to generalize the notion of steady
Euler flow. 
Indeed, recalling equation  \eqref{sys12}, we have 
\begin{mydef}
Let $\lr{\mc{M},g}$ denote a $3$-dimensional Riemannian manifold $\mc{M}$ with metric tensor $g$ 
spanned by coordinates $\lr{\Psi,\zeta,\Theta}$. 
Suppose that 
\sys{
&\frac{\p g_{\Psi\Theta}}{\p\Theta}=1+\frac{\p g_{\Theta\Theta}}{\p\Psi},\\
&\frac{\p g_{\Theta\Theta}}{\p\zeta}=\frac{\p g_{\Theta\zeta}}{\p\Theta}.
}{genE}
Then, $\bol{u}=-\p_{\Theta}$ defines a generalized steady Euler flow on $\mc{M}$. 
The flow is incompressible when
\eq{
\frac{\p\abs{\det\lr{g}}}{\p\Theta}=0.\label{gTh}
}
Furthermore, this flow reduces to a steady solution of the Euler equations 
when there exists a coordinate change to Cartesian coordinates $\bol{x}\lr{\Psi,\Theta,\zeta}$ realizing the metric tensor $g$. 
\end{mydef}

Since the metric tensor $g$ has $6$ independent components, solutions  of \eqref{genE} and \eqref{gTh} can be obtained rather easily. 
The existence of a corresponding realization in $3$-dimensional  Euclidean space is therefore contingent upon the existence of the coordinate change $\bol{x}\lr{\Psi,\Theta,\zeta}$, a problem that can be addressed separately.




\section{Concluding remarks}


In this work, we have investigated the existence of smooth solutions to system \eqref{sys1}. The main result, encapsulated in Theorem 1, demonstrates that there exist smooth solutions of system \eqref{sys1} in toroidal volumes without continuous Euclidean isometries, where the pressure gradient \(\nabla P \neq \bol{0}\). The proof of Theorem 1 employs a constructive approach: first, system \eqref{sys1} is reformulated as a problem of determining a coordinate transformation that satisfies the properties \eqref{sys13}. This transformation is then explicitly derived as a perturbation of an axially symmetric configuration, such as in equation \eqref{sys16}. The solutions obtained are compressible poloidal flows of the form \(\bol{u} = -\p_{\Theta}\), which lack continuous Euclidean isometries and support a non-vanishing pressure gradient.

As discussed in the introduction, this result has significant implications for addressing Grad's conjecture on the existence of regular magnetohydrostatic configurations \eqref{MHS} in toroidal volumes, supported by a non-vanishing pressure gradient, without continuous Euclidean isometries. Specifically, the findings of Theorem 1, alongside the analysis in \cite{SatoYamada23}, indicate that both compressible flows subject to an external potential or supported by a density source and incompressible flows supported by a pressure anisotropy (mathematically described by an integration factor \(\lambda(\bol{x})\) in front of the pressure gradient) can exist without continuous Euclidean isometries. These observations suggest that the primary challenges in solving Grad's conjecture are (i) the incompressibility constraint on the flow (or magnetic field), and (ii) the force balance between rotational and conservative forces. Therefore, the presence of continuous Euclidean isometries may not be the critical factor determining the conditions for the existence of solutions to the magnetohydrostatic equations. In other words, the \(\mathbb{R}^3\) differential operators (curl, gradient, and divergence) in \eqref{MHS} do not appear to inherently impose Euclidean symmetries on the solutions. Instead, these symmetries simplify the system by eliminating its hyperbolic components, thereby reducing it to the Grad-Shafranov equation. Conversely, the exactness and closedness conditions expressed by force balance and incompressibility seem to be the key obstacles to the existence of regular solutions in asymmetric toroidal configurations.

We also note that the formulation of the steady Euler/magnetohydrostatic equations, as presented in Propositions 6 and 7, may offer an alternative approach for examining the existence of solutions to system \eqref{MHS}. Notably, system \eqref{sysdiv02} is a second-order system of three partial differential equations for three unknowns \(\bol{x}(\Psi, \Theta, \zeta)\). Moreover, the method of Clebsch reduction has proven to be an effective tool for analyzing systems \eqref{sys1}, \eqref{sys2}, and \eqref{MHS} (see also \cite{SatoYamadaBoh}).

Finally, it is important to emphasize that the solutions to system \eqref{sys1} obtained in this study lack rotational transform (the number of poloidal transits per toroidal transit). Additionally, the explicit examples provided exhibit reflection symmetry across a plane. Whether solutions with rotational transform or without reflection symmetry exist remains an open question that warrants further investigation.

\section*{Acknowledgment}
N.S. would like to thank D. Pfefferl\'e, D. Perrella, N. Duignan, and F. Pasqualotto for useful discussion.   

\appendix

\section{Clebsch reduction of system (1)}
The aim of this appendix section is to reduce system \eqref{sys1} into a set of equations for Clebsch potentials. 
To this end, it is useful to write system \eqref{sys1} in the equivalent form
\sys{
&\bol{q}\cp\bol{u}=\nabla P~~~~{\rm in}~~\Omega,\\
&\bol{q}=\nabla\cp\bol{u}~~~~{\rm in}~~\Omega,\\
&\bol{u}\cdot\bol{n}=0,~~~~
P=P_b~~~~{\rm on}~~\p\Omega,
}{sys3}
where we introduced the auxiliary vector field $\bol{q}\lr{\bol{x}}$. It is standard that every exact $2$-form $q=du\in L^2\lr{\Omega}$, with $u=u_idx^i$, admits the following Clebsch representation (see theorems 2 and 3 of \cite{YosClebsch}),
\eq{
q=d\Psi\w d\Theta+d\alpha\w d\zeta.
}
Here, $\lr{\Psi,\theta,\zeta}\in C^2\lr{S_R}$ are a set of curvilinear coordinates such that the bounding surface corresponds to a level set of $\Psi$, i.e.  $\p\Omega=\lrc{\bol{x}\in S_R:\Psi\lr{\bol{x}}=\Psi_b\in\mathbb{R}}$, $S_R$ denotes a sphere of radius $R$ such that $\bar{\Omega}\subset S_R$, and $\Theta,\alpha\in H^1\lr{\Omega}$ are some functions. 
The quantities $\Psi$, $\Theta$, $\alpha$, and $\zeta$ are called Clebsch potentials. 
In vector notation, we have
\eq{
\bol{q}=\nabla\cp\bol{u}=\nabla\Psi\cp\nabla\Theta+\nabla\alpha\cp\nabla\zeta.
}
Assume that $J=\nabla\Psi\cp\nabla\Theta\cdot\nabla\zeta\neq 0$ in $\bar{\Omega}$ and perform the change of coordinates $\lr{\Psi,\theta,\zeta}\rightarrow\lr{\Psi,\Theta,\zeta}$.  
Then, setting $u^{\Psi}=\bol{u}\cdot\nabla\Psi$, $u^{\Theta}=\bol{u}\cdot\nabla\Theta$, and $u^{\zeta}=\bol{u}\cdot\nabla\zeta$, system \eqref{sys3} takes the form
\sys{
&-\lr{\alpha_{\Psi}u^{\zeta}+u^{\Theta}+P_{\Psi}}\nabla\Psi+\lr{u^{\Psi}-\alpha_{\Theta}u^{\zeta}-P_{\Theta}}\nabla\Theta+\lr{\alpha_{\Psi}u^{\Psi}+u^{\Theta}\alpha_{\Theta}-P_{\zeta}}\nabla\zeta=\bol{0}~~~~{\rm in}~~\Omega,\label{sys4a}\\
&\bol{u}=\nabla\Phi+\Psi\nabla\Theta+\alpha\nabla\zeta~~~~{\rm in}~~\Omega,\label{u}\\
&\bol{u}\cdot\nabla\Psi=0,~~~~
P=P_b
,~~~~\Psi=\Psi_b
~~~~{\rm on}~~{\p\Omega},
}{sys4}
where $\nabla\Phi\in {\rm ker}\lr{{\rm rot}}$ and $\Psi_b\in\mathbb{R}$. 
Introducing the metric coefficients
\eq{
g_{\Psi\Psi}=\p_{\Psi}\cdot\p_{\Psi},~~~~g_{\Psi\Theta}=\p_{\Psi}\cdot\p_{\Theta},~~~~g_{\Psi\zeta}=\p_{\Psi}\cdot\p_{\zeta},~~~~g_{\Theta\Theta}=\p_{\Theta}\cdot\p_{\Theta},~~~~g_{\Theta\zeta}=\p_{\Theta}\cdot\p_{\zeta},~~~~g_{\zeta\zeta}=\p_{\zeta}\cdot\p_{\zeta},\label{mt}
}
where $\lr{\p_{\Psi},\p_{\Theta},\p_{\zeta}}$ are the tangent vectors of the coordinate system, 
equation \eqref{sys4} can be equivalently written as 
\sys{
&u^{\Theta}=-\alpha_{\Psi}u^{\zeta}-P_{\Psi}~~~~{\rm in}~~{\Omega},\\
&u^{\Psi}=\alpha_{\Theta}u^{\zeta}+P_{\Theta}~~~~{\rm in}~~{\Omega},\\
&\alpha_{\Psi}P_{\Theta}-\alpha_{\Theta}P_{\Psi}=P_{\zeta}~~~~{\rm in}~~{\Omega},\\
&u^{\zeta}\lr{g_{\Psi\zeta}-\alpha_{\Psi}g_{\Psi\Theta}+\alpha_{\Theta}g_{\Psi\Psi}}=P_{\Psi}g_{\Psi\Theta}-P_{\Theta}g_{\Psi\Psi}+\Phi_{\Psi}~~~~{\rm in}~~{\Omega},\\
&u^{\zeta}\lr{\alpha_{\Theta}g_{\Psi\Theta}-\alpha_{\Psi}g_{\Theta\Theta}+g_{\Theta\zeta}}=\Phi_{\Theta}+\Psi+P_{\Psi}g_{\Theta\Theta}-P_{\Theta}g_{\Psi\Theta}~~~~{\rm in}~~{\Omega},\\
&u^{\zeta}\lr{g_{\zeta\zeta}+\alpha_{\Theta}g_{\Psi\zeta}-\alpha_{\Psi}g_{\Theta\zeta}}=\alpha+\Phi_{\zeta}+P_{\Psi}g_{\Theta\zeta}-P_{\Theta}g_{\Psi\zeta}~~~~{\rm in}~~{\Omega},\\
&u^{\Psi}=0,~~~~P=P_b
,~~~~\Psi=\Psi_b
~~~~{\rm on}~~\p\Omega.
}{sys5}
Suppose that $g_{\zeta\zeta}+\alpha_{\Theta}g_{\Psi\zeta}-\alpha_{\Psi}g_{\Theta\zeta}\neq 0$ in $\bar{\Omega}$. Then, in $\Omega$ the components $u^{\Psi}$, $u^{\Theta}$, and $u^{\zeta}$ are completely deterimined by
the coordinate system $\lr{\Psi,\Theta,\zeta}$ and the functions $\alpha$, $\Phi$, and $P$:
\sys{
&u^{\Theta}=-\alpha_{\Psi}u^{\zeta}-P_{\Psi},\\
&u^{\Psi}=\alpha_{\Theta}u^{\zeta}+P_{\Theta},\\
&u^{\zeta}=\frac{\alpha+\Phi_{\zeta}+P_{\Psi}g_{\Theta\zeta}-P_{\Theta}g_{\Psi\zeta}}{g_{\zeta\zeta}+\alpha_{\Theta}g_{\Psi\zeta}-\alpha_{\Psi}g_{\Theta\zeta}}.
}{sys6}
System \eqref{sys5} can thus be reduced to the following equations 
for the coordinates $\lr{\Psi,\Theta,\zeta}$ and the unknowns $\alpha$, $\Phi$, and $P$,
\sys{
&\alpha_{\Psi}P_{\Theta}-\alpha_{\Theta}P_{\Psi}=P_{\zeta}~~~~{\rm in}~~{\Omega},\\
&\frac{\alpha+\Phi_{\zeta}+P_{\Psi}g_{\Theta\zeta}-P_{\Theta}g_{\Psi\zeta}}{g_{\zeta\zeta}+\alpha_{\Theta}g_{\Psi\zeta}-\alpha_{\Psi}g_{\Theta\zeta}}\lr{g_{\Psi\zeta}-\alpha_{\Psi}g_{\Psi\Theta}+\alpha_{\Theta}g_{\Psi\Psi}}={P_{\Psi}g_{\Psi\Theta}-P_{\Theta}g_{\Psi\Psi}+\Phi_{\Psi}}~~~~{\rm in}~~{\Omega},\\
&\frac{\alpha+\Phi_{\zeta}+P_{\Psi}g_{\Theta\zeta}-P_{\Theta}g_{\Psi\zeta}}{g_{\zeta\zeta}+\alpha_{\Theta}g_{\Psi\zeta}-\alpha_{\Psi}g_{\Theta\zeta}}\lr{\alpha_{\Theta}g_{\Psi\Theta}-\alpha_{\Psi}g_{\Theta\Theta}+g_{\Theta\zeta}}=\Phi_{\Theta}+\Psi+P_{\Psi}g_{\Theta\Theta}-P_{\Theta}g_{\Psi\Theta}~~~~{\rm in}~~{\Omega},\\
&\alpha_{\Theta}\frac{\alpha+\Phi_{\zeta}+P_{\Psi}g_{\Theta\zeta}-P_{\Theta}g_{\Psi\zeta}}{g_{\zeta\zeta}+\alpha_{\Theta}g_{\Psi\zeta}-\alpha_{\Psi}g_{\Theta\zeta}}+P_{\Theta}=0,~~~~P=P_b,
~~~~\Psi=\Psi_b
~~~~{\rm on}~~\p\Omega.
}{sys7}




\section*{Statements and declarations}

\subsection*{Data availability}
Data sharing not applicable to this article as no datasets were generated or analysed during the current study.

\subsection*{Funding}
The research of NS was partially supported by JSPS KAKENHI Grant No. 21K13851, 22H04936, and 24K00615. This work was partly supported by MEXT Promotion of Distinctive Joint Research Center Program JPMXP0723833165. 

\subsection*{Competing interests} 
The author has no competing interests to declare that are relevant to the content of this article.


\begin{thebibliography}{99}

\bibitem{Hel} P. Helander,
\ti{Theory of plasma confinement in nonaxisymmetric magnetic fields}, Rep. Prog. Phys. \tb{77}, 087001 (2014). 

\bibitem{Kruskal} M. D. Kruskal and R. M. Kulsrud,
\ti{Equilibrium of a magnetically confined plasma in a toroid}, 
The Physics of Fluids \tb{1}, 4 (1958). 

\bibitem{Moffatt85} H. K. Moffatt, 
\ti{Magnetostatic equilibria and analogous Euler flows of arbitrary complex topology. Part 1. Fundamentals}, 
J. Fluid Mech. \tb{159}, pp. 359-378 (1985).



\bibitem{LoSurdo} C. Lo Surdo, 
\ti{Global magnetofluidostatic fields (an unsolved PDE problem)}, Int. J. Math. Math. Sci. \tb{9}, pp. 123-130 (1986).

\bibitem{Grad60} H. Grad, 
\ti{Reducible problems in magneto-fluid dynamics steady flows}, 
Rev. Mod. Phys. \tb{32}, 4 (1960). 

\bibitem{Yos90} Z. Yoshida and H. Yamada
\ti{Structurally-unstable
electrostatic potentials in plasmas},
Prog. Theor. Phys. \tb{84} 2 (1990).

\bibitem{Grad58} H. Grad and H. Rubin, 
\ti{Hydromagnetic equilibria and force-free fields} in Theoretical and experimental aspects of controlled nuclear fusion, 
Proceedings of the second United Nations international conference on the peaceful uses of atomic energy \tb{31},  
pp. 190-197 (1958).

\bibitem{Eisen} M. Eisenmberg and R. Guy,
\ti{A proof of the hairy ball
theorem}, Am. Math. Mon. \tb{86}, pp. 571-574 (1979).

\bibitem{Arnold} V. I. Arnold, 
\ti{On the topology of three-dimensional
steady flows of an ideal fluid}, J. Appl. Math. Mech. \tb{30}, pp. 223-226 (1966).

\bibitem{SalasI} D. Peralta-Salas \ti{Selected topics on the topology of ideal fluid flows}, International Journal of Geometric Methods in Modern Physics, \tb{13} (Supp. 1), 1630012 (2016).

\bibitem{Schwarz} G. Schwarz, in 
Hodge decomposition - a method for solving boundary value problems, Springer, pp. 67-72  (1995).

\bibitem{YosGiga} Z. Yoshida and Y. Giga, 
\ti{Remarks on spectra of operator rot}, 
Math. Z. \tb{204}, pp. 235-245 (1990). 

\bibitem{SalasBel} A. Enciso and D. Peralta-Salas, 
\ti{Beltrami fields with a nonconstant proportionality factor are rare}, 
Arch. Rat. Mech. Anal. \tb{220}, pp. 243-260 (2016).

\bibitem{SatoYamadaBel} N. Sato and M. Yamada, \ti{Local representation and construction of Beltrami fields}, Physica D: Nonlinear Phenomena \tb{391}, pp. 8-16 (2019).

\bibitem{Clelland} J. N. Clelland and T. Klotz, 
\ti{Beltrami fields with nonconstant proportionality factor}, 
\tb{236}, pp. 767-800 (2020). 

\bibitem{Abe} K. Abe, \ti{Rigidity of Beltrami fields with a non-constant proportionality factor}, J. Math. Phys.   \tb{63}, 041507 (2022). 

\bibitem{SalasBel2} D. Peralta-Salas and M. Vaquero, \ti{Beltrami Fields with Morse Proportionality Factor}, 	arXiv:2312.10511 (2023). 

\bibitem{Eden1} J. W. Edenstrasser, 
\ti{Unified treatment of symmetric MHD equilibria}, 
J. Plasma Phys. \tb{24}, pp. 299-313 (1980).

\bibitem{Eden2} J. W. Edenstrasser, 
\ti{The only three classes of symmetric
MHD equilibria}, J. Plasma Phys. \tb{24}, pp. 515-518
(1980).

\bibitem{Bruno} O. P. Bruno and P. Laurence,
\ti{Existence of three-dimensional toroidal MHD equilibria with nonconstant pressure}, 
Commun. Pure Appl. Math. \tb{49}, pp. 717-764 (1996).

\bibitem{Bob15} R. L. Dewar, Z. Yoshida, A.  Bhattacharjee, S. R. Hudson, \ti{Variational formulation of relaxed and multi-region relaxed magnetohydrodynamics}, Journal of Plasma Physics \tb{81} (6) 515810604 (2015).  

\bibitem{SalasMHS} A. Enciso, A. Luque, and D. Peralta-Salas, 
\ti{MHD Equilibria with nonconstant pressure in nondegenerate toroidal domains}, 
Journal of the European Mathematical Society, 
10.4171/JEMS/1410 (2023).

\bibitem{Grad} H. Grad,
\ti{Toroidal containment of a plasma},
The Physics of Fluids \tb{10}, 1 (1967).










\bibitem{Pasq} P. Constantin and F. Pasqualotto, 
\ti{Magnetic relaxation of a Voigt-MHD system}, Comm. Math. Phys. \tb{402}, pp.  1931-1952, (2023) 

 
\bibitem{Cao06} Y. Cao, E. M. Lunasin, and E. S. Titi, 
\ti{Global well-posedness of the three-dimensional viscous and inviscid simplified Bardina turbulence models}, 
Comm. Math. Sci. \tb{4}, 4, pp. 823-848 (2006).

\bibitem{Larios14} A. Larios and E. S. Titi, 
\ti{Higher-Order Global Regularity of an Inviscid Voigt-Regularization of the Three-Dimensional Inviscid Resistive Magnetohydrodynamic Equations}, 
J. Math. Fluid Mech. \tb{16}, pp. 59-76 (2014).

\bibitem{Larios10} A. Larios and E. S. Titi,
\ti{On the Higher-Order Global Regularity of the Inviscid Voigt-Regularization of Three-Dimensional Hydrodynamic Models}, 
Discrete and Continuous Dynamical Systems Series B \tb{14}, 2, pp. 603-627 (2010).

\bibitem{Ramos10} F. Ramos and E. S. Titi,
\ti{Invariant measures for the 3D Navier-Stokes-Voigt Equations and their Navier-Stokes limit}, Discrete and Continuous Dynamical Systems \tb{28}, 1 (2010).

\bibitem{Levant10} B. Levant, F. Ramos, and E. S. Titi, 
\ti{On the statistical properties of the 3D incompressible Navier-Stokes-Voigt model}, 
Commun. Math. Sci. \tb{8}, 1, pp. 277-293 (2010).














 




\bibitem{Grad67} H. Grad, 
\ti{The guiding center plasma}, 
Proc. Symp. Appl. Math. \tb{18}, pp. 162-248 (1967).

\bibitem{SatoYamada23} N. Sato and M. Yamada, 
\ti{Nested invariant tori foliating a vector field and its curl: toward MHD equilibria and steady Euler flows without continuous Eculidean isometries}, J. Math. Phys. \tb{64}, 081505 (2023).



\bibitem{Davids3} R. Cardona, 
N. Duignan, and D. Perrella, 
\ti{Asymmetry of MHD equilibria for generic adapted metrics}, 
arXiv:2312.14368v1 (2023).

\bibitem{Davids} D. Perrella, D. Pfefferl\'e, and L. Stoyanov, 
\ti{Rectifiability of divergence-free fields along invariant 2-tori}, 
Partial Differential Equations and Applications \tb{3}, 50 (2022).

\bibitem{Davids2} D. Perrella, 
N. Duignan, and D. Pfefferl\'e, 
\ti{Existence of global symmetries of divergence-free fields with first integrals}, 
J. Math. Phys. \tb{64}, 052705 (2023).

\bibitem{Constantin21} P. Constantin, T. D. Drivas, 
and D. Ginsberg, 
\ti{Flexibility and rigidity in steady fluid motion}, 
Comm. Math. Phys. \tb{385}, 
pp. 521-563 (2021).









\bibitem{YosClebsch} Z. Yoshida, \ti{Clebsch parametrization: basic properties and remarks on its applications}, 
J. Math. Phys. \tb{50}, 113101 (2009).

\bibitem{YosMor} Z. Yoshida and P. J. Morrison, \ti{Epi-two-dimensional fluid flow: A new topological paradigm for dimensionality}, Phys. Rev. Lett. \tb{119}, 244501 (2017).

\bibitem{GG} M. Golubitsky and V. Guillemin, \ti{Stable Mappings} in
Stable Mappings and Their Singularities, Springer-Verlag, New York, pp. 72-81 (1973).

\bibitem{SatoYamadaBoh} N. Sato and M. Yamada, 
\ti{A Reduced Ideal MHD System for Nonlinear Magnetic Field Turbulence in Plasmas with Approximate Flux Surfaces}, Journal of Mathematical Physics (to be published).























\end{thebibliography}
\end{document}